\newtheorem{theorem}{Theorem}[section]
\newtheorem{lemma}[theorem]{Lemma}
\theoremstyle{definition}
\newtheorem{definition}[theorem]{Definition}
\theoremstyle{remark}
\newtheorem{remark}[theorem]{Remark}
\numberwithin{equation}{section}
\def\XXint#1#2#3{{
\setbox0=\hbox{$#1{#2#3}{\int}$}
\vcenter{\hbox{$#2#3$}}\kern-.5\wd0}}
\begin{document}
\title{Optimal rates of convergence and error localization of Gegenbauer projections}
\author{Haiyong Wang\footnotemark[1]~\footnotemark[2]}
\maketitle
\renewcommand{\thefootnote}{\fnsymbol{footnote}}
\footnotetext[1]{School of Mathematics and Statistics, Huazhong
University of Science and Technology, Wuhan 430074, P. R. China.
E-mail: \texttt{haiyongwang@hust.edu.cn}}

\footnotetext[2]{Hubei Key Laboratory of Engineering Modeling and
Scientific Computing, Huazhong University of Science and Technology,
Wuhan 430074, P. R. China.}

\begin{abstract}

Motivated by comparing the convergence behavior of Gegenbauer
projections and best approximations, we study the optimal rate of
convergence for Gegenbauer projections in the maximum norm. We show
that the rate of convergence of Gegenbauer projections is the same
as that of best approximations under conditions of the underlying
function is either analytic on and within an ellipse and
$\lambda\leq0$ or differentiable and $\lambda\leq1$, where $\lambda$
is the parameter in Gegenbauer projections. If the underlying
function is analytic and $\lambda>0$ or differentiable and
$\lambda>1$, then the rate of convergence of Gegenbauer projections
is slower than that of best approximations by factors of
$n^{\lambda}$ and $n^{\lambda-1}$, respectively. An exceptional case
is functions with endpoint singularities, for which Gegenbauer
projections and best approximations converge at the same rate for
all $\lambda>-1/2$. For functions with interior or endpoint
singularities, we provide a theoretical explanation for the error
localization phenomenon of Gegenbauer projections and for why the
accuracy of Gegenbauer projections is better than that of best
approximations except in small neighborhoods of the critical points.
Our analysis provides fundamentally new insight into the power of
Gegenbauer approximations and related spectral methods.

\end{abstract}

{\bf Keywords:} Gegenbauer projections, best approximations,
analytic functions, piecewise analytic functions, functions of
fractional smoothness, optimal rates of convergence

\vspace{0.05in}

{\bf AMS classifications:} 41A10, 41A25, 42C10

\section{Introduction}\label{sec:introduction}
Orthogonal polynomials are ubiquitous in approximation theory and
numerical analysis and play crucial roles in numerous applications,
including the construction of Gaussian quadrature (Davis \&
Robinowitz, 1984), the resolution of Gibbs phenomenon (Adcock \&
Hansen, 2012; Gelb \& Tanner, 2006; Gottlieb \& Shu, 1997), and
spectral methods for the numerical solution of differential
equations (Guo, 2000; Hesthaven \emph{et al.}, 2007; Olver \&
Townsend, 2013; Shen \emph{et al.}, 2011). One of the most
attractive features of orthogonal polynomials is that their
approximation power depends solely on the regularity of the
underlying function and hence fast convergence can be achieved
whenever the underlying function is sufficiently smooth. Due to the
important role that orthogonal polynomials plays in diverse areas of
mathematic and physics, their approximation properties have
attracted considerable interest, especially in the spectral methods
community (e.g., Canuto, \emph{et al.}, 2006; Hesthaven \emph{et
al.}, 2007; Shen \emph{et al.}, 2011; Trefethen, 2013).

Let $\mathrm{d}\mu$ be a positive Borel measure on the interval
$[a,b]$, for which all moments of $\mathrm{d}\mu$ are finite. We
introduce the inner product $\langle f,g \rangle_{\mathrm{d}\mu} =
\int_{a}^{b} f(x) g(x) \mathrm{d}\mu(x)$ and let
$\{\varphi_k\}_{k=0}^{\infty}$ be a set of orthogonal polynomials
with respect to $\mathrm{d}\mu$. Then, for any $f\in L^2([a,b])$, it
can be expanded in terms of $\{\varphi_k\}$ as
\begin{align}\label{eq:FourierSeries}
f(x) = \sum_{k=0}^{\infty} f_k \varphi_k(x), \quad  f_k =
\frac{\langle f,\varphi_k \rangle_{\mathrm{d}\mu}}{\langle
\varphi_k,\varphi_k \rangle_{\mathrm{d}\mu}}.
\end{align}
Let $S_n(f)$ denote the truncation of the infinite series above
after the first $n+1$ terms, i.e., $S_n(f) = \sum_{k=0}^{n} f_k
\varphi_k(x)$, it is well known that $S_n(f)$ is the orthogonal
projection of $f$ onto the space
$\mathcal{P}_{n}=\mathrm{span}\{1,x,\ldots,x^{n}\}$. Existing
approaches for analyzing convergence of $S_n(f)$ in the maximum norm
can be roughly categorized into two types: (i) applying the
Lebesgue's lemma $\|f - S_n(f) \|_{\infty} \leq (1+\Lambda)\| f -
\mathcal{B}_n(f) \|_{\infty}$, where $\Lambda =
\sup_{f\not\equiv0}\|S_n(f)\|_{\infty}/\|f\|_{\infty}$ is the
Lebesgue constant of $S_n(f)$ and $\mathcal{B}_n(f)$ is the best
polynomial approximation of degree $n$ to $f$, i.e.,
$\|f-\mathcal{B}_n(f)\|_{\infty}=\min_{p\in\mathcal{P}_{n}}\|f-p\|_{\infty}$.
Hence, this approach transforms the error estimate of $S_n(f)$ to
the problem of finding estimates for the corresponding Lebesgue
constant; (ii) using the inequality $\|f - S_n(f) \|_{\infty} \leq
\sum_{k=n+1}^{\infty} |f_k| \|\varphi_k\|_{\infty}$, and the
remaining task is to find some sharp estimates of the coefficients
$\{f_k\}$. The former approach plays a key role in analyzing uniform
convergence of orthogonal projections and nowadays estimates for the
Lebesgue constants associated with classical orthogonal projections
have been well-understood. However, as far as we are aware, the
sharpness of the predicted convergence rates has not been addressed.
For the latter approach, a remarkable advantage is that some
computable error bounds of $S_n(f)$ can be established (e.g.,
Bernstein, 1912; Liu \emph{et al.}, 2019; Liu \emph{et al.}, 2021;
Trefethen, 2013; Wang \& Xiang, 2012; Wang, 2018; Wang, 2021; Xiang,
2012; Xiang \& Liu, 2020; Zhao \emph{et al.}, 2013). However, as
shown in Wang (2018) and Wang (2021), the convergence rate predicted
by this approach may be slower than the actual convergence rate.

In this work we are concerned with optimal rates of convergence of
Gegenbauer projections in the maximum norm, i.e.,
$\mathrm{d}\mu(x)=(1-x^2)^{\lambda-1/2}\mathrm{d}x$, where
$\lambda>-1/2$ and $[a,b]=[-1,1]$. In order to exhibit the
dependence on the parameter $\lambda$, we denote by
$S_n^{\lambda}(f)$ the Gegenbauer projection of degree $n$. By
Lebesgue's lemma, we have
\begin{align}\label{eq:errorG}
\|f - S_n^{\lambda}(f) \|_{\infty} &\leq (1 + \Lambda_n(\lambda)) \|
f - \mathcal{B}_n(f) \|_{\infty},
\end{align}
where
$\Lambda_n(\lambda)=\sup_{f\not\equiv0}\|S_n^{\lambda}(f)\|_{\infty}/\|f\|_{\infty}$
is the Lebesgue constant of Gegenbauer projections. It is known from
(Frenzen \& Wong, 1986; Levesley \& Kushpel, 1999; Lorch, 1959) that
\begin{align}\label{eq:Lebesgue}
\Lambda_n(\lambda) = \left\{
\begin{array}{ll}
{\displaystyle O(n^{\lambda}) }, & \hbox{$\lambda>0$,}   \\[8pt]
{\displaystyle O(\log n) },      & \hbox{$\lambda=0$,}   \\[8pt]
{\displaystyle O(1) },           & \hbox{$\lambda<0$.}
            \end{array}
            \right.
\end{align}
Note that the inequality (1.2) holds true for all $f\in{C}[-1,1]$.
One might ask how sharp the error estimates for $S_n^{\lambda}(f)$
obtained above are. First, it is easily seen that the predicted rate
of convergence of $S_n^{\lambda}(f)$ is optimal in the case
$\lambda<0$ since it is the same as that of $\mathcal{B}_n(f)$, and
is near-optimal in the case $\lambda=0$ since the Lebesgue constant
$\Lambda_n(\lambda)$ grows very slowly as $n$ increases. In the case
$\lambda>0$, we see that the rate of convergence of
$S_n^{\lambda}(f)$ is slower than that of $\mathcal{B}_n(f)$ by at
most a factor of $n^{\lambda}$. This difference may be negligible
for functions which are analytic in a region containing the interval
$[-1,1]$, but will be crucial for functions which are only
continuously differentiable on the interval $[-1,1]$. More recently,
the particular case of $\lambda=1/2$, which corresponds to Legendre
projections, was examined in Wang (2021). It was shown that the
predicted rate of convergence by \eqref{eq:errorG} is sharp, up to
constant factors, whenever the underlying function is analytic, but
is slower than the actual rate of convergence whenever the
underlying function is differentiable, such as piecewise analytic
functions of class $C^{s}[-1,1]$ with $s$ being a nonnegative
integer (see Definition \ref{def:PiecewiseAnal}) and functions with
algebraic singularities. Further, it was shown that the convergence
rates of Legendre projections for these differentiable functions are
actually the same as that of $\mathcal{B}_n(f)$. In this
perspective, it will be interesting to continue in this direction
and explore the case of Gegenbauer projections.

We highlight the main contributions of this paper as follows.
\begin{itemize}
\item[(i)] If $f$ is analytic in the region bounded by the
ellipse with foci $\pm1$ and the sum of the semiminor and semimajor
axes is $\rho>1$, we improve the existing results in Wang (2016) and
establish some new explicit error bounds for $S_n^{\lambda}(f)$. We
show that the inequality \eqref{eq:errorG} is sharp in the sense
that the convergence rate of $\mathcal{B}_n(f)$ is better than that
of $S_n^{\lambda}(f)$ by a factor of $n^{\lambda}$ for $\lambda>0$.

\item[(ii)] If $f$ belongs to the space of piecewise analytic functions
of class $C^{m-1}[-1,1]$ for some $m\in\mathbb{N}$, we establish
optimal convergence rates for $S_n^{\lambda}(f)$ and show that the
predicted rate of convergence by the inequality \eqref{eq:errorG} is
slower than the actual rate of convergence by a factor of
$n^{\min\{\lambda,1\}}$ whenever $\lambda>0$.

\item[(iii)] If $f$ has an interior or endpoint algebraic singularity, we
carry out a convergence analysis of $S_n^{\lambda}(f)$ for the model
function $f(x)=|x-\theta|^{\alpha}$, where $\theta\in[-1,1]$ and
$\alpha>0$ is not an even integer whenever $\theta\in(-1,1)$ and is
not an integer whenever $\theta=\pm1$. In the case of
$\theta\in(-1,1)$, we show that the maximum error of
$S_n^{\lambda}(f)$ is attained at one of the {\it critical points}
(i.e., $x=-1,\theta,1$), and the predicted rate of convergence by
the inequality \eqref{eq:errorG} is slower than the actual rate of
convergence by a factor of $n^{\min\{\lambda,1\}}$ for $\lambda>0$.
In the case of $\theta=\pm1$, we show that the maximum error of
$S_n^{\lambda}(f)$ is attained at $x=\theta$ and the predicted rate
of convergence by the inequality \eqref{eq:errorG} in this case is
slower than the actual rate of convergence by a factor of
$n^{\lambda}$ for all $\lambda>0$.

\item[(iv)] We derive pointwise rates of convergence of
$S_n^{\lambda}(f)$ for the model function defined above and show
that the convergence rate of $S_n^{\lambda}(f)$ at each point
$x\in(-1,\theta)\cup(\theta,1)$ is faster than that of at
$x=\theta$. As a consequence, we explain not only the error
localization property of $S_n^{\lambda}(f)$, i.e., the error away
from the singularity is smaller than the error at the singularity,
but also why the accuracy of $S_n^{\lambda}(f)$ is better than that
of $\mathcal{B}_n(f)$ except in small neighborhoods of critical
points.
\end{itemize}

The paper is organized as follows. In the next section, we introduce
some preliminaries which will be useful in the sequel. In section
\ref{sec:experiment}, we carry out numerical experiments on the
convergence rates of $S_n^{\lambda}(f)$ and $\mathcal{B}_n(f)$ and
then give some observations. In section \ref{sec:Analytic}, we
establish explicit error bounds of $S_n^{\lambda}(f)$ for analytic
functions. We analyze optimal rates of convergence of
$S_n^{\lambda}(f)$ for piecewise analytic functions of class
$C^{m-1}[-1,1]$, where $m\in\mathbb{N}$, in section
\ref{sec:Piecewise} and for functions with algebraic singularities
in section \ref{sec:fractional}. Finally, we give some concluding
remarks in section \ref{sec:conclusion}.

\section{Preliminaries}\label{sec:prelim}
In this section, we introduce some basic properties of Gegenbauer
polynomials and the gamma function that will be used throughout the
paper. All these properties can be found in (Olver \emph{et al.},
2010; Szeg\H{o}, 1939).

\subsection{Gamma function}
For $\Re(z)>0$, the gamma function is defined by
\begin{align}
\Gamma(z) = \int_{0}^{\infty} t^{z-1} e^{-t} \mathrm{d}t.
\end{align}
When $\Re(z)\leq0$, $\Gamma(z)$ is defined by analytic continuation.
The gamma function satisfies the recursive property $ \Gamma(z+1) =
z \Gamma(z)$, and the classical reflection formula
\begin{align}\label{eq:reflection}
\Gamma(z) \Gamma(1-z) = \frac{\pi}{\sin(\pi z)}, \quad
z\neq0,\pm1,\ldots.
\end{align}
Moreover, the duplication formula of the gamma function reads
\begin{align}\label{eq:duplication}
\Gamma(2z) = \pi^{-1/2} 2^{2z-1} \Gamma(z) \Gamma\left(z +
\frac{1}{2}\right), \quad 2z\neq0,-1,-2,\ldots.
\end{align}
The ratio of two gamma functions will be crucial for the derivation
of explicit bounds for the Gegenbauer coefficients and the
asymptotic behavior of the reproducing kernel of Gegenbauer
projections. Let $a,b$ be some real or complex and bounded
constants, then we have
\begin{align}\label{eq:AsyGAMMA}
\frac{\Gamma(z+a)}{\Gamma(z+b)} = z^{a-b} \left[1 +
\frac{(a-b)(a+b-1)}{2z} + O(z^{-2}) \right], \quad
z\rightarrow\infty.
\end{align}
In the special case of either $a=1$ or $b=1$, the following sharp
bounds will be useful in the subsequent analysis.
\begin{lemma}\label{lem:gamma}
For $\gamma>-1$, it holds for every $k\in\mathbb{N}$ that
\begin{align}\label{eq:ratio1}
\frac{\Gamma(k+1)}{\Gamma(k+\gamma)} \leq k^{1-\gamma} \left\{
\begin{array}{ll}
{\displaystyle \frac{1}{\Gamma(1+\gamma)} }, & \hbox{$0\leq \gamma < 1$,}   \\[8pt]
{\displaystyle 1},    & \hbox{$-1<\gamma<0$~\text{\rm
or}~$\gamma\geq1$,}
            \end{array}
            \right.
\end{align}
and
\begin{align}\label{eq:ratio2}
\frac{\Gamma(k+\gamma)}{\Gamma(k+1)} \leq k^{\gamma-1} \left\{
\begin{array}{ll}
{\displaystyle 1}, & \hbox{$0\leq \gamma < 1$,}   \\[8pt]
{\displaystyle \Gamma(1+\gamma)},    & \hbox{$-1<\gamma<0$~{\rm
or}~$\gamma\geq1$.}
            \end{array}
            \right.
\end{align}
Moreover, these upper bounds in \eqref{eq:ratio1} and
\eqref{eq:ratio2} are sharp in the sense that they can be attained
either $k=1$ or $k=\infty$.
\end{lemma}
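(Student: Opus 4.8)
The plan is to obtain both \eqref{eq:ratio1} and \eqref{eq:ratio2} from the monotonicity of a single sequence. Set
\begin{align*}
f(k):=\frac{\Gamma(k+1)}{\Gamma(k+\gamma)}\,k^{\gamma-1},\qquad k\ge 1,
\end{align*}
so that \eqref{eq:ratio1} is precisely an upper bound for $f(k)$ and \eqref{eq:ratio2} an upper bound for $1/f(k)$. First I would show that $f$ is monotone on the integers $k\ge 1$, with the direction governed by the sign of $\gamma(1-\gamma)$, and then compute its extreme values, which occur at $k=1$ and in the limit $k\to\infty$.

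For the monotonicity, the recursion $\Gamma(z+1)=z\Gamma(z)$ gives
\begin{align*}
\frac{f(k+1)}{f(k)}=\frac{k+1}{k+\gamma}\Bigl(1+\frac1k\Bigr)^{\gamma-1}.
\end{align*}
Writing $t:=1+1/k\in(1,2]$ and using $\gamma>-1$ together with $t-1=1/k\le1$, one has $k+\gamma=k\bigl(1+\gamma(t-1)\bigr)>0$, and the displayed ratio simplifies to $t^{\gamma}/\bigl(1+\gamma(t-1)\bigr)$; hence $f(k+1)/f(k)\gtrless1$ exactly according to whether $t^{\gamma}\gtrless1+\gamma(t-1)$. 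This last comparison is Bernoulli's inequality, i.e.\ the position of $s\mapsto s^{\gamma}$ relative to its tangent line at $s=1$: the function is concave for $0<\gamma<1$, so $t^{\gamma}<1+\gamma(t-1)$ and $f$ is strictly decreasing; it is convex for $-1<\gamma<0$ and for $\gamma>1$, so $t^{\gamma}>1+\gamma(t-1)$ and $f$ is strictly increasing; and $t^{\gamma}=1+\gamma(t-1)$ identically for $\gamma\in\{0,1\}$, so $f\equiv1$.

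It remains to evaluate the endpoints: $f(1)=\Gamma(2)/\Gamma(1+\gamma)=1/\Gamma(1+\gamma)$, while \eqref{eq:AsyGAMMA} with $a=1$, $b=\gamma$ gives $f(k)=1+\gamma(1-\gamma)/(2k)+O(k^{-2})\to1$ as $k\to\infty$. Consequently, for $0\le\gamma<1$ one gets $1\le f(k)\le f(1)=1/\Gamma(1+\gamma)$, which is exactly $\Gamma(k+1)/\Gamma(k+\gamma)\le k^{1-\gamma}/\Gamma(1+\gamma)$ and $\Gamma(k+\gamma)/\Gamma(k+1)=k^{\gamma-1}/f(k)\le k^{\gamma-1}$; while for $-1<\gamma<0$ or $\gamma\ge1$ one gets $1/\Gamma(1+\gamma)=f(1)\le f(k)\le1$, which is $\Gamma(k+1)/\Gamma(k+\gamma)=f(k)k^{1-\gamma}\le k^{1-\gamma}$ and $\Gamma(k+\gamma)/\Gamma(k+1)=k^{\gamma-1}/f(k)\le\Gamma(1+\gamma)k^{\gamma-1}$. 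This proves \eqref{eq:ratio1} and \eqref{eq:ratio2}, and the sharpness is read off from the same picture: the bounds $1/\Gamma(1+\gamma)$ and $\Gamma(1+\gamma)$ are attained at $k=1$, the bound $1$ is attained as $k\to\infty$, and when $\gamma\in\{0,1\}$ every bound is attained for all $k$ since then $f\equiv1$.

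The only genuinely nontrivial step is establishing the monotonicity with its correct direction in each range of $\gamma$; everything after the reduction $f(k+1)/f(k)\gtrless1\Leftrightarrow t^{\gamma}\gtrless1+\gamma(t-1)$ is routine bookkeeping. A continuous-variable alternative, if one preferred to avoid the discrete comparison, is to verify by a short partial-fraction computation that $\varphi(x):=\log f(x+1)-\log f(x)$ satisfies $\varphi'(x)=\gamma(1-\gamma)/\bigl(x(x+1)(x+\gamma)\bigr)$ on $[1,\infty)$ with $\varphi(x)\to0$, which again forces the stated monotonicity. I read the lemma as asserted for $k\ge1$ (for $k=0$ with $\gamma<1$ the factor $k^{1-\gamma}$ vanishes), consistent with the attainment being claimed at $k=1$.
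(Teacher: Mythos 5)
Your proof is correct and follows essentially the same route as the paper: both study the auxiliary sequence $\psi(k)=\Gamma(k+1)k^{\gamma-1}/\Gamma(k+\gamma)$, establish its monotonicity in each range of $\gamma$, and read off the bounds from $\psi(1)=1/\Gamma(1+\gamma)$ and $\lim_{k\to\infty}\psi(k)=1$. The only differences are cosmetic improvements on your side: you determine the sign of $\psi(k+1)/\psi(k)-1$ directly via Bernoulli's inequality (the paper instead shows the ratio sequence is monotone and tends to $1$, then infers its position relative to $1$), and you extract both \eqref{eq:ratio1} and \eqref{eq:ratio2} from the two-sided bounds on the single quantity $\psi(k)$, whereas the paper treats \eqref{eq:ratio2} as ``completely analogous.''
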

\begin{proof}
We only prove \eqref{eq:ratio1} and the proof of \eqref{eq:ratio2}
is completely analogous. In the cases $\gamma=0$ and $\gamma=1$,
\eqref{eq:ratio1} is trivial. Now consider the cases $-1<\gamma<0$
and $\gamma>0$ and $\gamma\neq1$. To this end, we introduce the
following sequence
\[
\psi(k) = \frac{\Gamma(k+1)}{\Gamma(k+\gamma)} k^{\gamma-1}.
\]
In view of the recursive property of $\Gamma(z)$, we obtain
\begin{align}
\frac{\psi(k+1)}{\psi(k)} = \frac{k+1}{k+\gamma} \left(\frac{k+1}{k}
\right)^{\gamma-1}. \nonumber
\end{align}
By differentiating the right-hand side of the above equation with
respect to $k$, one can easily check that the sequence
$\{\psi(k+1)/\psi(k)\}_{k=1}^{\infty}$ is strictly increasing
whenever $0<\gamma<1$ and is strictly decreasing whenever either
$-1<\gamma<0$ or $\gamma>1$. Since $\lim_{k\rightarrow\infty}
\psi(k+1)/\psi(k)=1$, we deduce that $\{\psi(k)\}_{k=1}^{\infty}$ is
strictly decreasing whenever $0<\gamma<1$ and is strictly increasing
whenever either $-1<\gamma<0$ or $\gamma>1$. Hence, for
$0<\gamma<1$, we have
\begin{align}
\psi(k)\leq\psi(1) ~~~ \Longrightarrow ~~~
\frac{\Gamma(k+1)}{\Gamma(k+\gamma)} \leq
\frac{k^{1-\gamma}}{\Gamma(1+\gamma)}, \nonumber
\end{align}
and the upper bound can be attained when $k=1$. For either
$-1<\gamma<0$ or $\gamma>1$, then
\begin{align}
\psi(k)\leq \lim_{k\rightarrow\infty} \psi(k) = 1 ~~~
\Longrightarrow ~~~  \frac{\Gamma(k+1)}{\Gamma(k+\gamma)} \leq
k^{1-\gamma}, \nonumber
\end{align}
and the upper bound can be attained when $k=\infty$. This proves
\eqref{eq:ratio1} and the proof of Lemma \ref{lem:gamma} is
complete.
\end{proof}

\subsection{Gegenbauer polynomials}
Let $n\geq0$ be an integer and let $\Omega:=[-1,1]$. The Gegenbauer
polynomial of degree $n$ is defined by
\begin{align}\label{def:GegenPoly}
C_{n}^{\lambda}(x) = \frac{(2\lambda)_n}{n!} {}_2\mathrm{
F}_1\left[\begin{matrix} -n, & n+2\lambda
\\   \lambda+\frac{1}{2}  \hspace{-1cm} &\end{matrix} ; ~ \frac{1-x}{2}
\right],
\end{align}
where ${}_2 \mathrm{F}_1(\cdot)$ is the Gauss hypergeometric
function defined by
\[
{}_2 \mathrm{F}_1 \left[\begin{matrix} a,~ b& \\  c
\end{matrix} \hspace{-.25cm} ;  z \right] = \sum_{k = 0}^{\infty} \frac{ (a)_k
(b)_k }{ (c)_k } \frac{ z^k }{ k! },
\]
and where $(z)_k$ denotes the Pochhammer symbol defined by $(z)_{k}
=(z)_{k-1} (z + k - 1)$ for $k\in\mathbb{N}$ and $(z)_0 = 1$. The
sequence of Gegenbauer polynomials
$\{C_k^{\lambda}(x)\}_{k=0}^{\infty}$ forms a system of polynomials
orthogonal over $\Omega$ with respect to the weight function
$\omega_{\lambda}(x)=(1-x^2)^{\lambda-1/2}$ and
\begin{equation}\label{eq:GegenOrthog}
\int_{\Omega} \omega_{\lambda}(x) C_{m}^{\lambda}(x)
C_{n}^{\lambda}(x) \mathrm{d}x = h_n^{\lambda}  \delta_{mn},
\end{equation}
where $\delta_{mn}$ is the Kronecker delta and
\[
h_n^{\lambda} = \frac{\pi 2^{1-2\lambda} \Gamma(n+2\lambda)}{
\Gamma(\lambda)^2 (n+\lambda) n!}, \quad \lambda>-1/2,~~
\lambda\neq0.
\]
Since $\omega_{\lambda}(x)$ is even, it follows that
$C_{n}^{\lambda}(x)$ satisfies the symmetry relation, i.e.,
$C_{n}^{\lambda}(x)=(-1)^n C_{n}^{\lambda}(-x)$ for each
$n=0,1,\ldots$, and this implies that $C_{n}^{\lambda}(x)$ is an
even function for even $n$ and an odd function for odd $n$. The
Rodrigues formula of Gegenbauer polynomials reads
\begin{align}\label{eq:Rodrigues}
\omega_{\lambda}(x) C_n^{\lambda}(x) =
\frac{-2\lambda}{n(n+2\lambda)} \frac{\mathrm{d}}{\mathrm{d}x}
\left\{ \omega_{\lambda+1}(x) C_{n-1}^{\lambda+1}(x) \right\},
\end{align}
which will be used in the asymptotic analysis of the Gegenbauer
coefficients.

Next, we state some explicit bounds on the maximum value of
Gegenbauer polynomials, which will be employed frequently in the
convergence analysis of Gegenbauer projections.
\begin{lemma}\label{lem:Bound}
If $\lambda>0$, then for all $n\in\mathbb{N}$,
\begin{align}\label{eq:BoundI}
\max_{|x|\leq1}|C_{n}^{\lambda}(x)| \leq n^{2\lambda-1} \left\{
\begin{array}{ll}
{\displaystyle \frac{1}{\Gamma(2\lambda)}},     & \hbox{$0<\lambda<1/2$,}   \\[10pt]
{\displaystyle 2\lambda},   & \hbox{$\lambda\geq1/2$.}
            \end{array}
            \right.
\end{align}
If $-1/2<\lambda<0$, then for all $n\in\mathbb{N}$,
\begin{align}\label{eq:BoundII}
\max_{|x|\leq1}|C_{n}^{\lambda}(x)| \leq n^{\lambda-1} \left\{
\begin{array}{ll}
{\displaystyle 2^{1-\lambda} |\lambda|},               & \hbox{$n=2,4,6,\ldots$,}   \\[8pt]
{\displaystyle \frac{2|\lambda|}{\sqrt{1+2\lambda}} }, &
\hbox{$n=1,3,5,\ldots$.}
            \end{array}
            \right.
\end{align}
\end{lemma}
\begin{proof}
As for \eqref{eq:BoundI}, it follows by combining the inequality
$|C_{n}^{\lambda}(x)|\leq C_{n}^{\lambda}(1)=(2\lambda)_n/n!$ with
Lemma \ref{lem:gamma}. As for \eqref{eq:BoundII}, it follows by
combining equations (18.14.5) and (18.14.6) in Olver \emph{et al.}
(2010) with Lemma \ref{lem:gamma}.
\end{proof}

Finally, we note that Gegenbauer polynomials include some important
polynomials such as Legendre and Chebyshev polynomials as special
cases, and more specifically,
\begin{align}\label{eq:GegChebU}
P_n(x) = C_n^{1/2}(x),  \quad  U_n(x) = C_n^{1}(x), \quad n\geq 0,
\end{align}
where $P_n(x)$ is the Legendre polynomial of degree $n$ and $U_n(x)$
is the Chebyshev polynomial of the second kind of degree $n$. When
$\lambda = 0$, the Gegenbauer polynomials reduce to the Chebyshev
polynomials of the first kind by the following definition
\begin{align}\label{eq:GegChebT}
\lim_{\lambda\rightarrow0^{+}} \lambda^{-1} C_{n}^{\lambda}(x) =
\frac{2}{n} T_n(x), \quad n \geq 1,
\end{align}
where $T_n(x)$ is the Chebyshev polynomial of the first kind of
degree $n$.

\section{Experimental observations}\label{sec:experiment}
In this section we carry out some numerical experiments to compare
the convergence behavior of $\mathcal{B}_n(f)$ and
$S_n^{\lambda}(f)$. In order to quantify the discrepancy between the
rates of convergence of both methods, we introduce the quantity
\begin{align}\label{eq:Indicator}
\mathcal{R}^{\lambda}(n) = \frac{\|f - S_n^{\lambda}(f) \|_{\infty}
}{\|f - \mathcal{B}_n(f) \|_{\infty}} \geq1.
\end{align}
Moreover, using \eqref{eq:GegenOrthog}, the Gegenbauer projection
$S_n^{\lambda}(f)$ can be written as
\begin{align}\label{eq:GegenSeries}
S_n^{\lambda}(f) &= \sum_{k=0}^{n} a_k^{\lambda} C_k^{\lambda}(x),
\quad a_k^{\lambda} = \frac{1}{h_k^{\lambda} } \int_{\Omega}
\omega_{\lambda}(x) C_k^{\lambda}(x) f(x) \mathrm{d}x.
\end{align}
In our computations, we compute $\mathcal{B}_n(f)$ using the
barycentric-Remez algorithm (Pachon \& Trefethen, 2009) and its
implementation is available in Chebfun with the \texttt{minimax}
command (Driscoll \emph{et al.}, 2014). Moreover, the maximum error
of $S_n^{\lambda}(f)$ is measured by using a finer grid in $\Omega$.
Throughout the rest of the paper, we may use $S_n^{\lambda}(f,x)$
instead of $S_n^{\lambda}(f)$ when computing $S_n^{\lambda}(f)$ at
the point $x$.

\begin{figure}[t!]
\centering\includegraphics[trim=36mm 0mm 0mm
0mm,width=16cm,height=9cm]{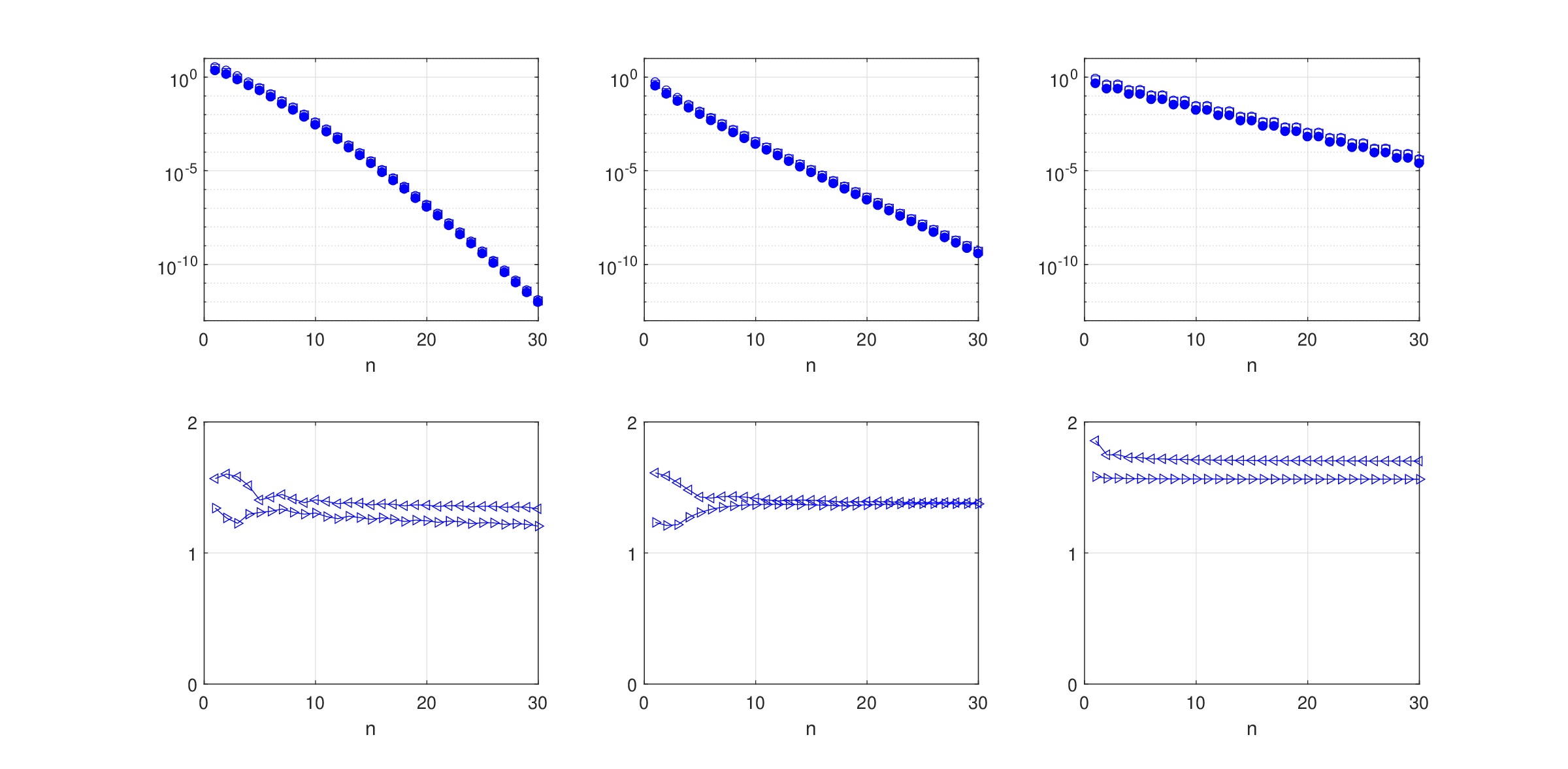} \caption{Top row shows the
log plot of the maximum errors of $\mathcal{B}_n(f)$ ($\bullet$) and
$S_n^{\lambda}(f)$ with $\lambda=-2/5$ ($\circ$) and $\lambda=-1/10$
($\Box$), for $f_1$ (left), $f_2$ (middle) and $f_3$ (right). Bottom
row shows the plot of the corresponding $\mathcal{R}^{\lambda}(n)$
for $\lambda=-2/5$ ($\triangleleft$) and $\lambda=-1/10$
($\triangleright$).} \label{fig:ExamI}
\end{figure}

\subsection{Analytic functions}
We consider the following three test functions
\begin{align}\label{eq:TestFun1}
f_1(x) = e^{2x^3}, \quad f_2(x) = \ln(1.2+x), \quad  f_3(x) =
1/(1+9x^2).
\end{align}
We divide the choice of the parameter $\lambda$ into two ranges:
$\lambda\in(-1/2,0]$ and $\lambda>0$. Figure \ref{fig:ExamI}
illustrates the maximum errors of $\mathcal{B}_n(f)$ and
$S_n^{\lambda}(f)$ for $\lambda=-2/5$ and $\lambda=-1/10$ and the
quantity $\mathcal{R}^{\lambda}(n)$ as a function of $n$. From the
top row of Figure \ref{fig:ExamI}, we see that the maximum error of
$\mathcal{B}_n(f)$ is indistinguishable with that of
$S_n^{\lambda}(f)$. From the bottom row of Figure \ref{fig:ExamI},
we see that these two $\mathcal{R}^{\lambda}(n)$ tend, respectively,
to some finite constants as $n$ grows, and thus the rate of
convergence of $S_n^{\lambda}(f)$ is the same as that of
$\mathcal{B}_n(f)$. Figure \ref{fig:ExamII} illustrates the maximum
errors of $\mathcal{B}_n(f)$ and $S_n^{\lambda}(f)$ for $\lambda=1$
and $\lambda=2$ and $n^{-\lambda}\mathcal{R}^{\lambda}(n)$ as a
function of $n$. From the top row of Figure \ref{fig:ExamII}, we see
clearly that the rate of convergence of $\mathcal{B}_n(f)$ is faster
than that of $S_n^{\lambda}(f)$. From the bottom row of Figure
\ref{fig:ExamII}, we see that these two
$n^{-\lambda}\mathcal{R}^{\lambda}(n)$ tend, respectively, to some
finite constants as $n$ grows, which imply that the rate of
convergence of $S_n^{\lambda}(f)$ is slower than that of
$\mathcal{B}_n(f)$ by a factor of $n^{\lambda}$.

\begin{figure}[ht]
\centering
\includegraphics[trim=36mm 0mm 0mm 0mm,width=16cm,height=9cm]{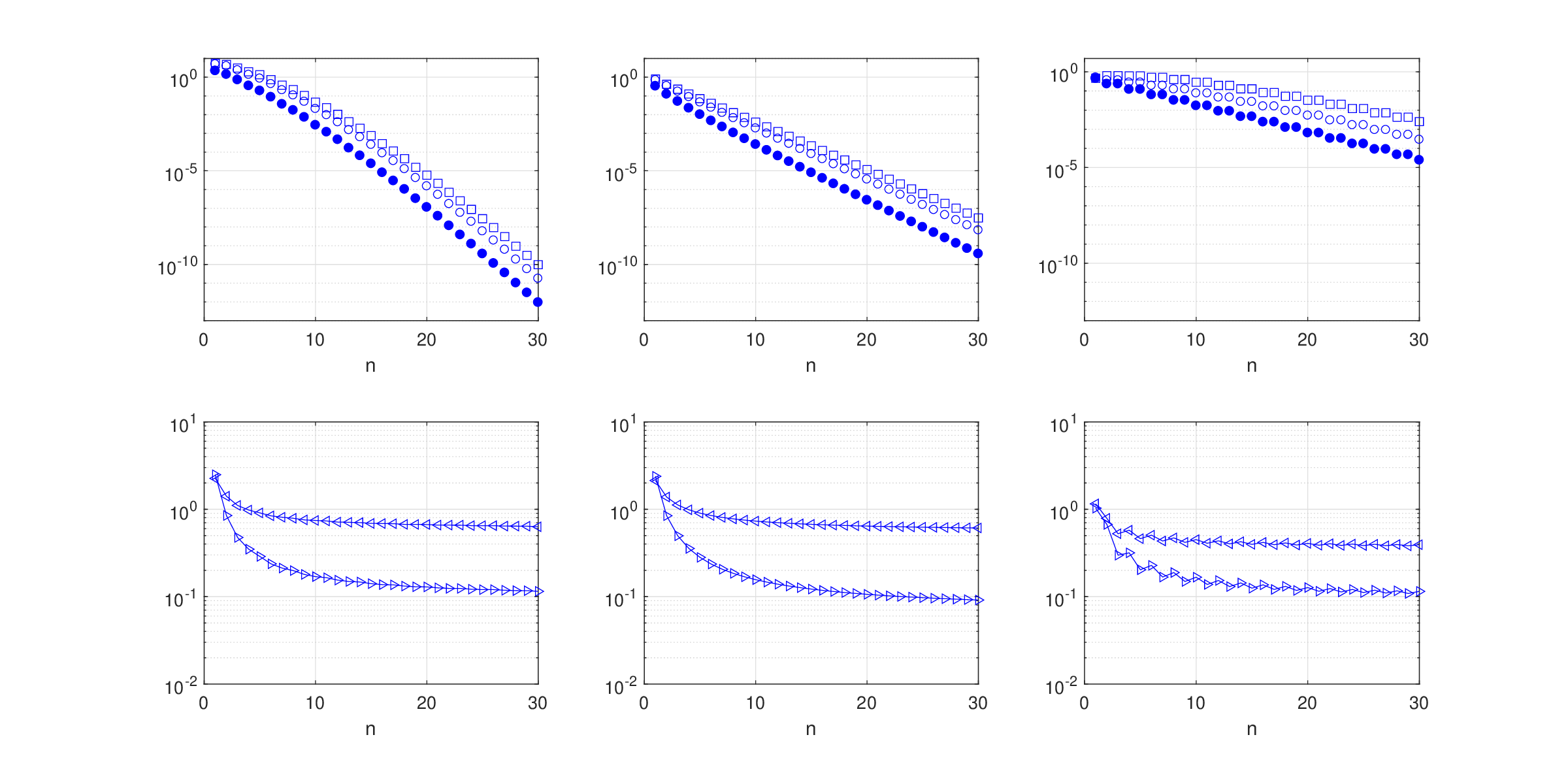}
\caption{Top row shows the log plot of the maximum errors of
$\mathcal{B}_n(f)$ ($\bullet$) and $S_n^{\lambda}(f)$ with
$\lambda=1$ ($\circ$) and $\lambda=2$ ($\Box$), for $f_1$ (left),
$f_2$ (middle) and $f_3$ (right). Bottom row shows the log plot of
the corresponding $n^{-\lambda}\mathcal{R}^{\lambda}(n)$ for
$\lambda=1$ ($\triangleleft$) and $\lambda=2$ ($\triangleright$). }
\label{fig:ExamII}
\end{figure}

In summary, the above observations suggest the following
conclusions:
\begin{itemize}
\item For $\lambda\in(-1/2,0]$, the rate of convergence of
$S_n^{\lambda}(f)$ is the same as that of $\mathcal{B}_n(f)$;

\item For $\lambda>0$, however, the rate of convergence of
$S_n^{\lambda}(f)$ is slower than that of $\mathcal{B}_n(f)$ by a
factor of $n^{\lambda}$.
\end{itemize}

\begin{figure}[ht]
\centering
\includegraphics[trim = 36mm 0mm 0mm 0mm,width=16cm,height=9cm]{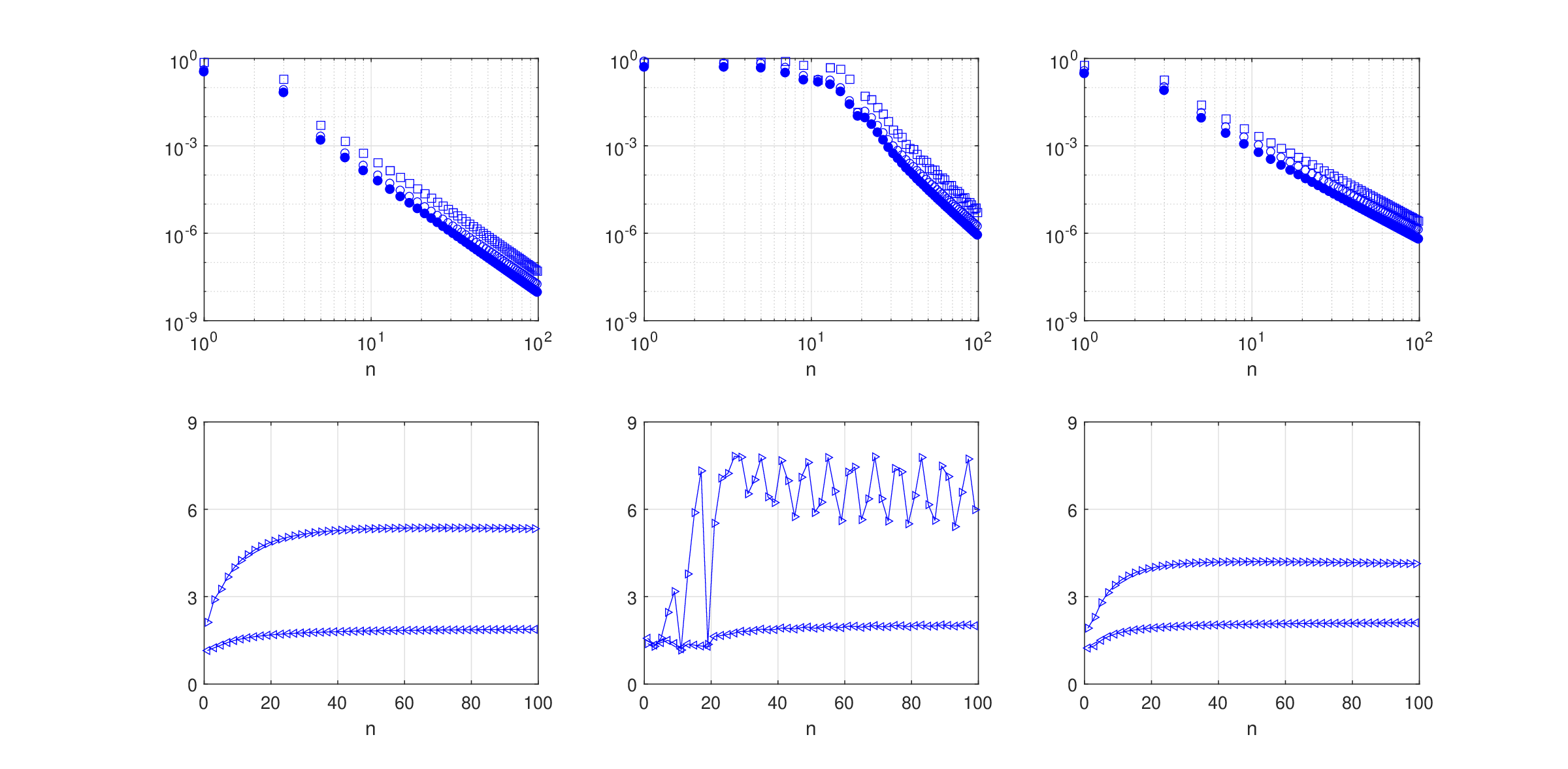}
\caption{Top row shows the log-log plot of the maximum errors of
$\mathcal{B}_n(f)$ ($\bullet$), $S_n^{\lambda}(f)$ with
$\lambda=-1/5$ ($\circ$) and $\lambda=9/10$ ($\Box$), for $f_4$
(left), $f_5$ (middle) and $f_6$ (right). Bottom row shows the plot
of the corresponding $\mathcal{R}^{\lambda}(n)$ for $\lambda=-1/5$
($\triangleleft$) and $\lambda=9/10$ ($\triangleright$). }
\label{fig:ExamIII}
\end{figure}

\subsection{Differentiable functions}
We consider the following test functions
\begin{align}\label{eq:TestFun2}
f_4(x) = (x)_{+}^4, \quad f_5(x) = \left| \sin(4x) \right|^5, \quad
f_6(x) = \left\{
\begin{array}{ll}
{\displaystyle 2\cos(x) },        & \hbox{$x<0$,}   \\[8pt]
{\displaystyle 2x^3 - x^2 + 2 },   & \hbox{$x\geq0$,}
            \end{array}
            \right.
\end{align}
where $(x)_{+}^k$ is the {\it truncated power function} defined by
\begin{align}
(x)_{+}^k = \left\{
\begin{array}{ll}
{\displaystyle x^k }, & \hbox{$x\geq0$,}   \\[8pt]
{\displaystyle 0 },   & \hbox{$x<0$,}
            \end{array}
            \right. ~~ k\geq1,~~ \mbox{and} ~~
(x)_{+}^0 = \left\{
\begin{array}{ll}
{\displaystyle 1 }, & \hbox{$x\geq0$,}   \\[8pt]
{\displaystyle 0 }, & \hbox{$x<0$.}
            \end{array}
            \right.
\end{align}
As will become clear later, the above three functions belong to the
space of piecewise analytic functions of class ${C}^{m-1}(\Omega)$
with $m=4,5,3$, respectively. In our numerical tests, we divide the
choice of the parameter $\lambda$ into ranges: $\lambda \in
(-1/2,1]$ and $\lambda>1$. Figure \ref{fig:ExamIII} illustrates the
maximum errors of $\mathcal{B}_n(f)$ and $S_n^{\lambda}(f)$ for
$\lambda=-1/5$ and $\lambda=9/10$ and the quantity
$\mathcal{R}^{\lambda}(n)$ as a function of $n$. From the top row of
Figure \ref{fig:ExamIII}, we see that the maximum error of
$S_n^{\lambda}(f)$ is slightly worse than that of
$\mathcal{B}_n(f)$. From the bottom row of Figure \ref{fig:ExamIII},
we see that these two $\mathcal{R}^{\lambda}(n)$ tend to or
oscillate around some finite constants as $n$ grows, which imply
that the rate of convergence of $S_n^{\lambda}(f)$ is the same as
that of $\mathcal{B}_n(f)$. Figure \ref{fig:ExamIV} illustrates the
maximum errors of $\mathcal{B}_n(f)$ and $S_n^{\lambda}(f)$ for
$\lambda=3/2$ and $\lambda=3$ and
$n^{1-\lambda}\mathcal{R}^{\lambda}(n)$ as a function of $n$. From
the top row of Figure \ref{fig:ExamIV}, we see that the rate of
convergence of $S_n^{\lambda}(f)$ is obviously slower than that of
$\mathcal{B}_n(f)$. From the bottom row of Figure \ref{fig:ExamIV},
we see that these two $n^{1-\lambda}\mathcal{R}^{\lambda}(n)$ tend
to or oscillate around some finite constants as $n$ grows, which
imply that the rate of convergence of $S_n^{\lambda}(f)$ is slower
than that of $\mathcal{B}_n(f)$ by a factor of $n^{\lambda-1}$.

\begin{figure}[ht]
\centering
\includegraphics[trim = 36mm 0mm 0mm 0mm,width=16cm,height=9cm]{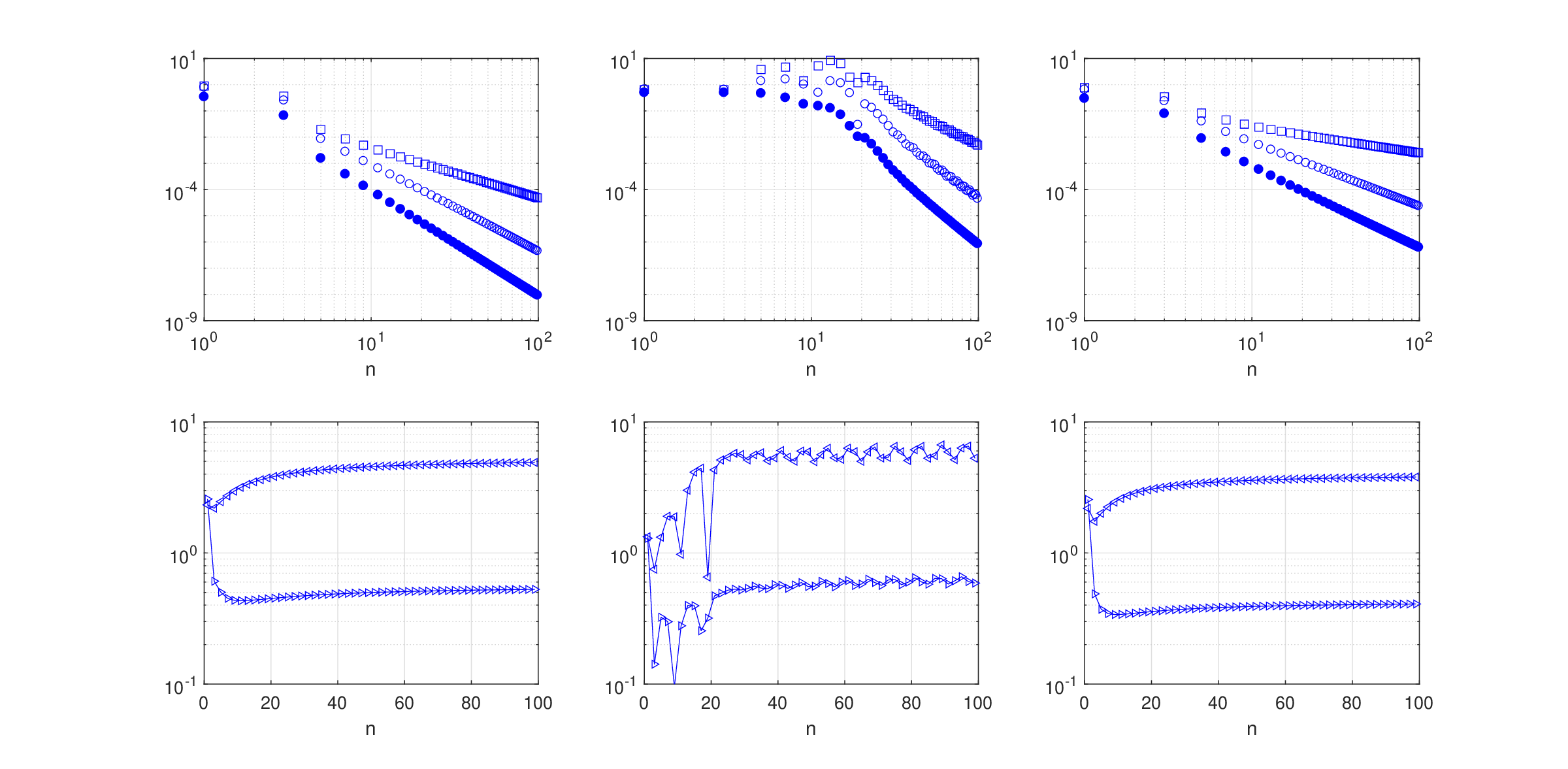}
\caption{Top row shows the log-log plot of the maximum errors of
$\mathcal{B}_n(f)$ ($\bullet$), $S_n^{\lambda}(f)$ with
$\lambda=3/2$ ($\circ$) and $\lambda=3$ ($\Box$), for $f_4$ (left),
$f_5$ (middle) and $f_6$ (right). Bottom row shows the log plot of
the corresponding $n^{1-\lambda}\mathcal{R}^{\lambda}(n)$ for
$\lambda=3/2$ ($\triangleleft$) and $\lambda=3$ ($\triangleright$).}
\label{fig:ExamIV}
\end{figure}

In summary, the above observations suggest the following
conclusions:
\begin{itemize}
\item For $\lambda\in(-1/2,1]$, the rate of convergence of
$S_n^{\lambda}(f)$ is the same as that of $\mathcal{B}_n(f)$;

\item For $\lambda>1$, however, the rate of convergence of
$S_n^{\lambda}(f)$ is slower than that of $\mathcal{B}_n(f)$ by a
factor of $n^{\lambda-1}$, which is one power of $n$ smaller than
the predicted result using \eqref{eq:errorG} and
\eqref{eq:Lebesgue}.
\end{itemize}
In the following sections, we shall carry out a convergence rate
analysis of $S_n^{\lambda}(f)$ to explain these observations. We
remark that the convergence results of the particular case
$\lambda=0$ (that corresponds to Chebyshev projections) have been
included in the above two observations. We refer to (Liu \emph{et
al.}, 2019; Trefethen, 2013) for more details on the convergence
rate analysis of Chebyshev projections and to Wang (2021) for a
comparison of Chebyshev, Legendre projections and
$\mathcal{B}_n(f)$. Hereafter, we will omit discussion of this case.

\section{Explicit and optimal error bounds of Gegenbauer projections for analytic functions}
\label{sec:Analytic} In this section, we establish some new error
bounds of Gegenbauer projections for analytic functions. Let
$\mathcal{E}_{\rho}$ denote the Bernstein ellipse
\begin{equation}\label{def:Bern}
\mathcal{E}_{\rho} = \left\{ z \in \mathbb{C} ~\bigg|~ z = \frac{u +
u^{-1}}{2},~~ |u| = \rho\geq1 \right\},
\end{equation}
and it has foci at $\pm 1$ and the major and minor semi-axes are
given by $(\rho+\rho^{-1})/2$ and $(\rho-\rho^{-1})/2$,
respectively.

The starting point of our analysis is the contour integral
expression of the Gegenbauer coefficients, which was derived in
Cantero \& Iserles (2012) by rearranging the Taylor expansion and in
Wang (2016) by rearranging the Chebyshev expansion. Here, we propose
an alternative way for deriving the contour integral expression
using Cauchy's integral formula and a connection formula between the
associated Legendre functions of the second kind and hypergeometric
functions.
\begin{lemma}\label{lem:Contour}
Suppose that $f$ is analytic in the region bounded by the ellipse
$\mathcal{E}_{\rho}$ for some $\rho>1$, then for each $k\geq0$ and
$\lambda>-1/2$ and $\lambda\neq0$,
\begin{align}\label{eq:Contour}
a_k^{\lambda} & = \frac{c_{k,\lambda} }{i\pi}
\oint_{\mathcal{E}_{\rho}} \frac{f(z)}{ (z\pm\sqrt{z^2 - 1})^{k+1} }
{}_2\mathrm{ F}_1\left[\begin{matrix} k + 1, & 1 - \lambda
\\   k + \lambda + 1 \hspace{-1cm} &\end{matrix} ; \frac{1}{(z\pm\sqrt{z^2 - 1}
)^{2}} \right]  \mathrm{d}z,
\end{align}
where $i$ is the imaginary unit and the sign in $z \pm \sqrt{z^2 -
1}$ is chosen so that $|z\pm\sqrt{z^2 - 1}|>1$ and
\begin{align}\label{eq:c}
c_{k,\lambda} = \frac{\Gamma(\lambda)
\Gamma(k+1)}{\Gamma(k+\lambda)}.
\end{align}
\end{lemma}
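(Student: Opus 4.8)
The plan is to start from the integral representation of the Gegenbauer coefficient in \eqref{eq:GegenSeries}, namely $a_k^\lambda = (h_k^\lambda)^{-1}\int_{-1}^1 \omega_\lambda(x)C_k^\lambda(x)f(x)\,\mathrm{d}x$, and replace the real-axis integral by a contour integral over $\mathcal{E}_\rho$. The natural tool is the \emph{generating-function / second-kind-function} machinery for Gegenbauer polynomials: there is a function $Q_k^\lambda(z)$, analytic in $\mathbb{C}\setminus[-1,1]$ and decaying like $(z\pm\sqrt{z^2-1})^{-(k+1)}$ at infinity, such that for $f$ analytic inside $\mathcal{E}_\rho$ one has a Cauchy-type formula $a_k^\lambda = \frac{1}{i\pi}\oint_{\mathcal{E}_\rho} f(z)\,\kappa_k^\lambda(z)\,\mathrm{d}z$ for an explicit kernel $\kappa_k^\lambda$. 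Concretely, I would invoke the classical expansion $\frac{1}{h_k^\lambda}\omega_\lambda(x)C_k^\lambda(x)$ against the Cauchy kernel: writing the Neumann-type series $\sum_{k\ge 0} C_k^\lambda(x)\,w_k(z) = (\text{something involving }(1-2xz+z^2)^{-\lambda})$ and recognizing that the coefficient extraction corresponds precisely to contour integration around $\mathcal{E}_\rho$, one obtains $a_k^\lambda$ as the stated integral, with the hypergeometric factor arising from expanding the relevant generating function about $z=\infty$.

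In more detail, the key steps are: (i) deform $\int_{-1}^1 \omega_\lambda(x)C_k^\lambda(x)f(x)\,\mathrm{d}x$ by writing $f(x)$ via the Cauchy integral formula $f(x) = \frac{1}{2\pi i}\oint_{\mathcal{E}_\rho}\frac{f(z)}{z-x}\,\mathrm{d}z$ and exchanging the order of integration, which reduces everything to computing $\int_{-1}^1 \frac{\omega_\lambda(x)C_k^\lambda(x)}{z-x}\,\mathrm{d}x$, a standard Gegenbauer function of the second kind; (ii) identify this inner integral in closed form --- it is a multiple of $(z\pm\sqrt{z^2-1})^{-(k+1)}\,{}_2F_1\!\big[k+1,\,1-\lambda;\,k+\lambda+1;\,(z\pm\sqrt{z^2-1})^{-2}\big]$ --- using either the known formula for $Q_k^\lambda$ or a direct computation via the substitution $x = \cos\theta$ together with the hypergeometric representation \eqref{def:GegenPoly} of $C_k^\lambda$; (iii) assemble the constant, using the explicit value of $h_k^\lambda$, the normalization $C_k^\lambda(1)=\Gamma(k+2\lambda)/(\Gamma(k+1)\Gamma(2\lambda))$ from \eqref{eq:Bound1}, and the duplication formula \eqref{eq:duplication} to collapse the $\Gamma$-products down to $c_{k,\lambda}=\Gamma(\lambda)\Gamma(k+1)/\Gamma(k+\lambda)$. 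The branch of $\sqrt{z^2-1}$ is fixed throughout by $|z+\sqrt{z^2-1}|>1$ on $\mathcal{E}_\rho$, which is exactly what makes the ${}_2F_1$ argument lie in the unit disk and the series converge.

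The main obstacle I anticipate is step (ii): producing the closed-form hypergeometric expression for the Gegenbauer function of the second kind $Q_k^\lambda(z)=\int_{-1}^1\frac{\omega_\lambda(x)C_k^\lambda(x)}{z-x}\,\mathrm{d}x$ with exactly the parameters claimed. One clean route is to expand $\frac{1}{z-x}=\sum_{j\ge 0} x^j z^{-j-1}$ for $|z|$ large, use the known moments $\int_{-1}^1 \omega_\lambda(x)x^j C_k^\lambda(x)\,\mathrm{d}x$ (nonzero only for $j\ge k$ with $j\equiv k\bmod 2$, expressible via ratios of $\Gamma$-functions), resum the resulting power series in $z^{-1}$ into the stated ${}_2F_1$, and then analytically continue from a neighborhood of $z=\infty$ to all of $\mathbb{C}\setminus[-1,1]$ --- in particular to $\mathcal{E}_\rho$. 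The bookkeeping of Pochhammer symbols to match $\{k+1,\,1-\lambda;\,k+\lambda+1\}$ and to factor out $c_{k,\lambda}$ cleanly is the delicate part; the reflection and duplication formulas \eqref{eq:reflection}--\eqref{eq:duplication} for $\Gamma$ should handle the final simplification. Once the kernel is identified, deforming the $x$-integral's contribution into the single contour integral over $\mathcal{E}_\rho$ and checking that no residues are picked up (the integrand $f(z)Q_k^\lambda(z)$ being analytic between $[-1,1]$ and $\mathcal{E}_\rho$) is routine.
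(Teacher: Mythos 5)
Your route is genuinely different from the paper's. The paper does not give a self-contained proof at all: it cites Cantero and Iserles, whose derivation expresses $a_k^{\lambda}$ as a linear combination of the Taylor coefficients $f^{(j)}(0)$, turns that into an integral transform with a hypergeometric kernel, and then applies a hypergeometric transformation to accelerate convergence; and it cites \cite{wang2016gegenbauer}, which instead rearranges the Chebyshev coefficients of the second kind via connection coefficients. What you propose is the third classical route: insert the Cauchy formula for $f$, apply Fubini, and identify the Cauchy transform $Q_k^{\lambda}(z)=\int_{-1}^{1}\omega_{\lambda}(x)C_k^{\lambda}(x)(z-x)^{-1}\,\mathrm{d}x$ as a Gegenbauer function of the second kind in closed hypergeometric form. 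This is sound, and the normalization works out: with $w=z+\sqrt{z^2-1}$ one needs $Q_k^{\lambda}(z)=2h_k^{\lambda}c_{k,\lambda}\,w^{-k-1}\,{}_2\mathrm{F}_1\bigl[k+1,1-\lambda;k+\lambda+1;w^{-2}\bigr]$, and comparing leading terms at $z\to\infty$ (using the leading coefficient $2^k(\lambda)_k/k!$ of $C_k^{\lambda}$) confirms the constant $c_{k,\lambda}=\Gamma(\lambda)\Gamma(k+1)/\Gamma(k+\lambda)$ exactly. Your approach has the advantage of being self-contained and of making the kernel's meaning (a second-kind function) transparent; the paper's cited routes avoid having to evaluate $Q_k^{\lambda}$ directly.

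One step in your step (ii) is underestimated. Expanding $(z-x)^{-1}=\sum_j x^j z^{-j-1}$ and resumming the moments does \emph{not} land directly on the stated ${}_2\mathrm{F}_1$: it produces a hypergeometric series in the variable $1/z^2$, not in $w^{-2}=(z+\sqrt{z^2-1})^{-2}$. Passing from the $1/z^2$ form to the $w^{-2}$ form requires a quadratic transformation of the Gauss hypergeometric function, not merely the reflection and duplication formulas for $\Gamma$; this transformation is precisely the nontrivial ingredient the paper credits to Cantero and Iserles (the $1/z^2$ series even fails to converge on the part of $\mathcal{E}_{\rho}$ where $|z|=(\rho-\rho^{-1})/2<1$, whereas $|w|=\rho>1$ everywhere on the contour, which is the whole point of the transformed kernel). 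So either invoke the known closed form of the Gegenbauer function of the second kind in the $w$-variable (Erd\'elyi et al., \emph{Higher Transcendental Functions}, \S3.15.2, or the Cantero--Iserles kernel itself), or add the quadratic transformation explicitly; as written, the "direct computation" branch of your step (ii) does not close. The remaining points (justifying Fubini for $\lambda>-1/2$, continuing from a neighborhood of $z=\infty$, and the remark about residues, which is actually vacuous once Cauchy's formula has been applied) are fine.
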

\begin{proof}
By Cauchy's integral formula and exchanging the order of
integration, we obtain
\begin{align}\label{eq:ContourStep1}
a_k^{\lambda} &= \frac{1}{h_k^{\lambda}} \int_{\Omega}
\omega_{\lambda}(x) C_k^{\lambda}(x) \left(\frac{1}{2\pi i}
\oint_{\mathcal{E}_{\rho}} \frac{f(z)}{z-x} \mathrm{d}z \right)
\mathrm{d}x \nonumber \\
&= \frac{1}{\pi i} \oint_{\mathcal{E}_{\rho}} f(z) \left( \frac{1}{2
h_k^{\lambda}} \int_{\Omega} \frac{\omega_{\lambda}(x)
C_k^{\lambda}(x)}{z-x} \mathrm{d}x \right) \mathrm{d}z.
\end{align}
We denote by $\Upsilon$ the term inside the bracket in the last
equality. From (Gradshteyn \& Ryzhik, 2007, Equation (7.312.1)) we
know that $\Upsilon$ can be expressed in the form
\begin{align}
\Upsilon &= \frac{\pi^{1/2}
2^{3/2-\lambda}}{2\Gamma(\lambda)h_k^{\lambda}} e^{-(\lambda-1/2)\pi
i} (z^2-1)^{\lambda/2-1/4} Q_{k+\lambda-1/2}^{\lambda-1/2}(z),
\nonumber
\end{align}
where $Q_{\nu}^{\mu}(z)$ is the associated Legendre function of the
second kind of degree $\nu$ and order $\mu$. Furthermore, using the
connection formula between $Q_{\nu}^{\mu}(z)$ and ${}_2\mathrm{
F}_1(\cdot)$ in (Gradshteyn \& Ryzhik, 2007, Equation (8.777.2)) and
the last transformation formula of ${}_2\mathrm{ F}_1(\cdot)$ in
(Gradshteyn \& Ryzhik, 2007, Equation (9.131.1)), we have that
\begin{align}
\Upsilon &= \frac{c_{k,\lambda}}{2^{1-2\lambda}}
\frac{(z^2-1)^{\lambda-1/2}}{(z\pm\sqrt{z^2-1})^{k+2\lambda}}
{}_2\mathrm{ F}_1\left[\begin{matrix} k + 2\lambda, &  \lambda
\\   k + \lambda + 1 \hspace{-1cm} &\end{matrix}~ ; \frac{1}{(z\pm\sqrt{z^2-1})^{2}}
\right], \nonumber \\
&= \frac{c_{k,\lambda}}{(z\pm\sqrt{z^2-1})^{k+1}} {}_2\mathrm{
F}_1\left[\begin{matrix} k + 1, & 1-\lambda
\\   k + \lambda + 1 \hspace{-1cm} &\end{matrix}~ ; \frac{1}{(z\pm\sqrt{z^2-1})^{2}}
\right]. \nonumber
\end{align}
Substituting this into \eqref{eq:ContourStep1} gives the desired
result. This completes the proof.
\end{proof}

We now state some new bounds on the Gegenbauer coefficients
$\{a_k^{\lambda}\}$ for all $\lambda>-1/2$ and $\lambda\neq0$.
Compared to the previous results in Wang (2016), our bounds are new
whenever $-1/2<\lambda<0$ and are more concise whenever $\lambda>0$.
\begin{theorem}\label{thm:GegBound}
Under the assumptions of Lemma \ref{lem:Contour}, we have for
$\lambda\neq0$ that
\begin{align}\label{eq:GenBound}
|a_0^{\lambda}| \leq D(\lambda,\rho) \left\{
            \begin{array}{ll}
{\displaystyle \frac{1}{|\Gamma(\lambda)|} }, & \hbox{$-1/2<\lambda<0$,}   \\[12pt]
{\displaystyle \lambda }, & \hbox{$0<\lambda\leq 1$,}   \\[5pt]
{\displaystyle \frac{1}{\Gamma(\lambda)} }, & \hbox{$\lambda>1$,}
            \end{array}
            \right.  \quad  |a_k^{\lambda}| &\leq D(\lambda,\rho)
            \frac{k^{1-\lambda}}{\rho^{k}}, \quad k\geq1,
\end{align}
where $D(\lambda,\rho)$ is defined by
\begin{align}\label{eq:D}
D(\lambda,\rho) &= \frac{\displaystyle M
L(\mathcal{E}_{\rho})}{\pi\rho} \left\{   \begin{array}{ll}
{\displaystyle
\frac{\Gamma(1+\lambda)^2\Gamma(1-2\lambda)}{(-\lambda)\Gamma(1-\lambda)}
\left(1-\frac{1}{\rho^2} \right)^{2\lambda-1}}, & \hbox{$-1/2<\lambda<0$,}   \\[15pt]
{\displaystyle \frac{1}{\lambda} \left(1 - \frac{1}{\rho^2} \right)^{\lambda-1}}, & \hbox{$0<\lambda\leq1$,}   \\[15pt]
{\displaystyle \Gamma(\lambda) \left(1 + \frac{1}{\rho^2}
\right)^{\lambda-1}}, & \hbox{$\lambda>1$,}
            \end{array}
            \right.
\end{align}
and $M=\max_{z\in \mathcal{E}_{\rho}}|f(z)|$ and
$L(\mathcal{E}_{\rho})$ is the length of the circumference of
$\mathcal{E}_{\rho}$.
\end{theorem}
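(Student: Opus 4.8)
The plan is to estimate the hypergeometric kernel appearing in the contour representation of Lemma~\ref{lem:Contour} on the Bernstein ellipse. Writing $v=z\pm\sqrt{z^2-1}$ for the branch with $|v|>1$ as in Lemma~\ref{lem:Contour}, one has $|v|=\rho$ for every $z\in\mathcal{E}_\rho$, so the argument $w=v^{-2}$ of the ${}_2\mathrm{F}_1$ satisfies $|w|=\rho^{-2}<1$, whence $|1-w|\ge 1-\rho^{-2}$ and $|1-wt|\le 1+\rho^{-2}$ for all $t\in[0,1]$. Using $|f(z)|\le M$ on $\mathcal{E}_\rho$ and $\oint_{\mathcal{E}_\rho}|\mathrm{d}z|=L(\mathcal{E}_\rho)$, the whole problem reduces to bounding $|c_{k,\lambda}\,{}_2\mathrm{F}_1[k+1,1-\lambda;k+\lambda+1;w]|$ for $|w|=\rho^{-2}$ and then feeding the result into the sharp gamma-ratio estimates of Lemma~\ref{lem:gamma} with $\gamma=\lambda$.

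For $\lambda>0$ I would invoke Euler's integral representation in the form ${}_2\mathrm{F}_1[k+1,1-\lambda;k+\lambda+1;w]=\frac{\Gamma(k+\lambda+1)}{\Gamma(k+1)\Gamma(\lambda)}\int_0^1 t^k(1-t)^{\lambda-1}(1-wt)^{\lambda-1}\,\mathrm{d}t$, which is legitimate precisely because $\lambda>0$. Since $c_{k,\lambda}=\Gamma(\lambda)\Gamma(k+1)/\Gamma(k+\lambda)$, the product $c_{k,\lambda}\,\Gamma(k+\lambda+1)/(\Gamma(k+1)\Gamma(\lambda))$ collapses to $k+\lambda$, so after Fubini $a_k^\lambda=\frac{k+\lambda}{i\pi}\oint_{\mathcal{E}_\rho}\frac{f(z)}{v^{k+1}}\int_0^1 t^k(1-t)^{\lambda-1}(1-v^{-2}t)^{\lambda-1}\,\mathrm{d}t\,\mathrm{d}z$. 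Bounding $|1-v^{-2}t|^{\lambda-1}$ by $(1-\rho^{-2})^{\lambda-1}$ when $0<\lambda\le1$ and by $(1+\rho^{-2})^{\lambda-1}$ when $\lambda>1$, the remaining $t$-integral is the Beta integral $\int_0^1 t^k(1-t)^{\lambda-1}\mathrm{d}t=\Gamma(k+1)\Gamma(\lambda)/\Gamma(k+\lambda+1)$, and $(k+\lambda)\Gamma(k+1)/\Gamma(k+\lambda+1)=\Gamma(k+1)/\Gamma(k+\lambda)$. Inserting $\Gamma(k+1)/\Gamma(k+\lambda)\le k^{1-\lambda}/\Gamma(1+\lambda)$ (for $0<\lambda<1$) or $\le k^{1-\lambda}$ (for $\lambda\ge1$) from \eqref{eq:ratio1}, and evaluating at $k=0$ directly, yields \eqref{eq:GenBound} with the stated $D_\lambda(\rho)$ for all $\lambda>0$.

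For $-1/2<\lambda<0$ the representation above no longer converges, so I would split off $k=0$. For $k\ge1$, the Pfaff transformation gives ${}_2\mathrm{F}_1[k+1,1-\lambda;k+\lambda+1;w]=(1-w)^{\lambda-1}\,{}_2\mathrm{F}_1[\lambda,1-\lambda;k+\lambda+1;w/(w-1)]$, and Euler's integral now does apply to the new ${}_2\mathrm{F}_1$ because $k+2\lambda>0$; combined with $|w/(w-1)|\le(\rho^2-1)^{-1}$ this yields $|{}_2\mathrm{F}_1[k+1,1-\lambda;k+\lambda+1;w]|\le(1-\rho^{-2})^{2\lambda-1}$ on $|w|=\rho^{-2}$, after which $|c_{k,\lambda}|=|\Gamma(\lambda)|\Gamma(k+1)/\Gamma(k+\lambda)\le|\Gamma(\lambda)|k^{1-\lambda}$ from \eqref{eq:ratio1} closes this case (the factor $\Gamma(1+\lambda)\Gamma(1-2\lambda)/\Gamma(1-\lambda)$ hidden in $D_\lambda(\rho)$ is $\ge1$, so nothing is lost). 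For $k=0$ even the transformed ${}_2\mathrm{F}_1$ is not covered by Euler's integral ($2\lambda<0$); here I would instead use Euler's transformation ${}_2\mathrm{F}_1[1,1-\lambda;\lambda+1;w]=(1-w)^{2\lambda-1}\,{}_2\mathrm{F}_1[\lambda,2\lambda;\lambda+1;w]$, note that the Taylor coefficients of the last ${}_2\mathrm{F}_1$ are nonnegative (for $j\ge1$ both $(\lambda)_j$ and $(2\lambda)_j$ are negative, each containing exactly one negative factor), so its modulus on $|w|=\rho^{-2}$ is attained at $w=\rho^{-2}$ and, being increasing on $[0,1)$, is dominated by its value at $w=1$, which equals $\Gamma(1+\lambda)\Gamma(1-2\lambda)/\Gamma(1-\lambda)$ by Gauss's summation theorem. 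Together with $c_{0,\lambda}=1$ this produces exactly the stated bound for $|a_0^\lambda|$.

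The main obstacle is the range $-1/2<\lambda<0$: unlike the case $\lambda>0$, one cannot run a single Euler representation through all $k$, and the endpoint $k=0$ (where $k+2\lambda<0$) forces a separate positivity-plus-monotonicity argument together with Gauss's theorem — which is precisely the source of the factor $\Gamma(1+\lambda)\Gamma(1-2\lambda)/\Gamma(1-\lambda)$ in $D_\lambda(\rho)$ — and the bookkeeping needed to make all constants match uniformly in $k$ is the delicate part. The estimates for $\lambda>0$ are, by contrast, routine once the Euler integral is in place.
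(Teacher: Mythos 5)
Your argument is correct, and for $\lambda>0$ it is essentially the paper's: both reduce matters to the Euler integral representation of ${}_2\mathrm{F}_1[k+1,1-\lambda;k+\lambda+1;\cdot]$, bound $|1-wt|^{\lambda-1}$ by $(1-\rho^{-2})^{\lambda-1}$ or $(1+\rho^{-2})^{\lambda-1}$ according to the sign of $\lambda-1$, and close with the gamma-ratio estimates of Lemma~\ref{lem:gamma}. Where you genuinely diverge is the range $-1/2<\lambda<0$. The paper keeps the original hypergeometric series and dominates it termwise via the two Pochhammer inequalities $\frac{(k+1)_j}{(k+\lambda+1)_j}\le\frac{(1)_j}{(\lambda+1)_j}$ and $\frac{(1-\lambda)_j}{(1+\lambda)_j}\le\frac{\Gamma(1+\lambda)\Gamma(1-2\lambda)}{\Gamma(1-\lambda)}\frac{(1-2\lambda)_j}{(1)_j}$, which collapse everything to the binomial series $(1-|z|)^{2\lambda-1}$ with the gamma-factor in front, uniformly in $k\ge0$. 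You instead apply Pfaff's transformation for $k\ge1$ (which restores the applicability of the Euler integral because $k+2\lambda>0$) and treat $k=0$ separately through Euler's transformation, positivity of the Taylor coefficients of ${}_2\mathrm{F}_1[\lambda,2\lambda;\lambda+1;\cdot]$, and Gauss's summation theorem. Both routes are valid and land on the same $D_\lambda(\rho)$; yours is marginally sharper for $k\ge1$ (the factor $\Gamma(1+\lambda)\Gamma(1-2\lambda)/\Gamma(1-\lambda)\ge1$ is not needed there) and correctly identifies the $k=0$ term as the true source of that constant, at the price of a case split and two extra hypergeometric transformations, whereas the paper's termwise comparison is shorter and uniform in $k$. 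A cosmetic difference: the paper quotes the intermediate bound \eqref{eq:GenS1} from \cite[Thm.~4.1]{wang2016gegenbauer}, while you derive the corresponding estimate directly from the contour integral using $|v|=\rho$ on $\mathcal{E}_\rho$; the content is identical.
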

\begin{proof}
We follow the same line as that in Wang (2016). From Lemma
\ref{lem:Contour} and (Wang, 2016, Theorem~4.1) we have that
\begin{align}\label{eq:GenS1}
|a_k^{\lambda}| &\leq \frac{|c_{k,\lambda}| M
L(\mathcal{E}_{\rho})}{\pi \rho^{k+1}} \left\{
            \begin{array}{ll}
{\displaystyle {}_2\mathrm{ F}_1\left[\begin{matrix} k + 1, &
1-\lambda \\  k+\lambda+1 \hspace{-1cm} &\end{matrix} ; ~
\frac{1}{\rho^2}
\right]}, & \hbox{$-1/2<\lambda\leq 1$ and $\lambda\neq0$,}   \\[15pt]
{\displaystyle  {}_2\mathrm{ F}_1\left[\begin{matrix} k + 1, &
1-\lambda
\\  k+\lambda+1 \hspace{-1cm} &\end{matrix} ;~ -\frac{1}{\rho^2}
\right]}, & \hbox{$\lambda>1$.}
            \end{array}
            \right.
\end{align}
It remains to bound $c_{k,\lambda}$ and these hypergeometric
functions on the right-hand side of \eqref{eq:GenS1}. For the
former, it is easily seen that $|c_{k,\lambda}|=1$ when $k=0$. For
$k\geq1$, using Lemma \ref{lem:gamma} we obtain
\begin{align}
|c_{k,\lambda}| \leq k^{1-\lambda} \left\{
\begin{array}{ll}
{\displaystyle |\Gamma(\lambda)| }, & \hbox{$-1/2 <\lambda<0$,}   \\[6pt]
{\displaystyle \lambda^{-1} },      & \hbox{$0<\lambda \leq 1$,}  \\[6pt]
{\displaystyle \Gamma(\lambda) },   & \hbox{$\lambda>1$.}
            \end{array}
            \right.
\end{align}
Next, we consider the bound of these hypergeometric functions on the
right-hand side of \eqref{eq:GenS1}. For $\lambda>0$ and $|z|<1$,
using the Euler integral representation of the Gauss hypergeometric
function (Olver \emph{et al.}, 2010, Equation~(15.6.1)), we obtain
\begin{align}\label{eq:GenS2}
\left| {}_2\mathrm{ F}_1\left[\begin{matrix} k + 1, & 1-\lambda
\\   k+\lambda+1  \hspace{-1cm} &\end{matrix} ;~ z
\right] \right| &=
\frac{\Gamma(k+\lambda+1)}{\Gamma(k+1)\Gamma(\lambda)}
\left| \int_{0}^{1} t^{k} (1-t)^{\lambda-1} \left(1 - zt\right)^{\lambda-1} \mathrm{d}t \right| \nonumber \\[8pt]
&\leq \left\{
            \begin{array}{ll}
{\displaystyle \left(1 - |z| \right)^{\lambda-1}}, & \hbox{$0<\lambda\leq 1$,}   \\[8pt]
{\displaystyle \left(1 + |z| \right)^{\lambda-1}}, &
\hbox{$\lambda>1$.}
            \end{array}
            \right.
\end{align}
For $-1/2<\lambda<0$, it is easily verified that
\begin{align}
\frac{(k+1)_j}{(k+\lambda+1)_j} \leq \frac{(1)_j}{(\lambda+1)_j},
\quad   \frac{(1-\lambda)_j}{(1+\lambda)_j} \leq
\frac{\Gamma(1+\lambda)\Gamma(1-2\lambda)}{\Gamma(1-\lambda)}
\frac{(1-2\lambda)_j}{(1)_j}, \nonumber
\end{align}
and therefore
\begin{align}\label{eq:GenS3}
\left| {}_2\mathrm{ F}_1\left[\begin{matrix} k + 1, & 1-\lambda
\\   k+\lambda+1  \hspace{-1cm} &\end{matrix}; ~ z
\right] \right| &\leq \sum_{j=0}^{\infty} \frac{(k+1)_j
(1-\lambda)_j}{(k+\lambda+1)_j} \frac{|z|^{j}}{j!} \leq
\sum_{j=0}^{\infty} \frac{(1)_j (1-\lambda)_j}{(\lambda+1)_j}
\frac{|z|^{j}}{j!} \nonumber \\
&\leq \frac{\Gamma(1+\lambda)\Gamma(1-2\lambda)}{\Gamma(1-\lambda)}
\sum_{j=0}^{\infty} \frac{(1-2\lambda)_j}{j!} |z|^{j} \nonumber \\
&= \frac{\Gamma(1+\lambda)\Gamma(1-2\lambda)}{\Gamma(1-\lambda)} (1
- |z|)^{2\lambda-1}.
\end{align}
Combining \eqref{eq:GenS1}, \eqref{eq:GenS2} and \eqref{eq:GenS3},
the desired bounds follow immediately.
\end{proof}

With the above result, we are now ready to establish error bounds
for Gegenbauer projections in the maximum norm, and these bounds are
fully explicit with respect to the parameters $\lambda$, $\rho$ and
$n$ and are more informative than existing results. Throughout the
paper, $\lfloor x \rfloor$ denotes the integer part of $x$.
\begin{theorem}\label{thm:RateAnal}
Suppose that $f$ is analytic in the region bounded by the ellipse
$\mathcal{E}_{\rho}$ for some $\rho>1$, and let $D(\lambda,\rho)$ be
defined by \eqref{eq:D}.
\begin{enumerate}
\item[\rm (i)] If $\lambda>0$, then for
$n>\lfloor\eta\lambda/((\eta-1)\ln\rho) \rfloor$ and $\eta>1$ is
arbitrary,
\begin{align}\label{eq:AnalBoundI}
\|f - S_n^{\lambda}(f) \|_{\infty} \leq \mathcal{K}
\frac{n^{\lambda}}{\rho^n},
\end{align}
where $\mathcal{K}$ is defined by
\begin{align}
\mathcal{K}= \eta \frac{D(\lambda,\rho)}{\ln\rho} \left\{
            \begin{array}{ll}
{\displaystyle \frac{1}{\Gamma(2\lambda)} }, & \hbox{$0<\lambda\leq 1/2$,}   \\[10pt]
{\displaystyle  2\lambda}, & \hbox{$\lambda>1/2$.}
            \end{array}
            \right. \nonumber
\end{align}

\item[\rm (ii)] If $-1/2<\lambda<0$, then for $n\geq0$,
\begin{align}\label{eq:AnalBoundII}
\| f - S_n^{\lambda}(f) \|_{\infty} \leq \frac{2|\lambda|
D(\lambda,\rho)}{\sqrt{1+2\lambda}(\rho-1) \rho^n}.
\end{align}
\end{enumerate}
Moreover, up to constant factors, these bounds on the right-hand
side of \eqref{eq:AnalBoundI} and \eqref{eq:AnalBoundII} are optimal
in the sense that they can not be improved in any negative powers of
$n$ further.
\end{theorem}
\begin{proof}
For part (i), combining Lemma \ref{lem:Bound} with Theorem
\ref{thm:GegBound} gives
\begin{align}
\|f - S_n^{\lambda}(f) \|_{\infty} &\leq D(\lambda,\rho) \left(
\sum_{k=n+1}^{\infty} \frac{k^{\lambda}}{\rho^{k}} \right) \left\{
            \begin{array}{ll}
{\displaystyle \frac{1}{\Gamma(2\lambda)}}, & \hbox{$0<\lambda\leq 1/2$,}   \\[10pt]
{\displaystyle 2\lambda}, & \hbox{$\lambda>1/2$.}
            \end{array}
            \right. \nonumber
\end{align}
For the sum inside the bracket, one can easily check that
$k^{\lambda}/\rho^k$ is strictly decreasing with respect to $k$
whenever $k\geq\lambda/\ln\rho$, and thus
\begin{align}\label{eq:BoundS2}
\sum_{k=n+1}^{\infty} \frac{k^{\lambda}}{\rho^{k}} &\leq
\int_{n}^{\infty} \frac{x^{\lambda}}{\rho^{x}} \mathrm{d}x =
\frac{\Gamma(\lambda+1,n\ln\rho)}{(\ln\rho)^{1+\lambda}},
\end{align}
where $\Gamma(a,x)$ is the incomplete gamma function (see, e.g.,
Olver \emph{et al.}, p.~174). Furthermore, from Natalini \& Palumbo
(2000) we know that $|\Gamma(a,x)|\leq \eta x^{a-1} e^{-x}$ for
$a>1$ and $x>(a-1)\eta/(\eta-1)$ and $\eta>1$ is arbitrary, the
desired result \eqref{eq:AnalBoundI} follows. The proof of part (ii)
is similar and we omit the details.

We now turn to prove the optimality of \eqref{eq:AnalBoundI} and
\eqref{eq:AnalBoundII}. Here we only prove the former since the
latter can be proved by a similar argument. Suppose by contradiction
that there exist constants $\gamma,c>0$ independent of $n$ such that
\begin{align}\label{eq:LegProjBoundS3}
\|f-S_n^{\lambda}(f) \|_{\infty} \leq c
\frac{n^{\lambda-\gamma}}{\rho^n}.
\end{align}
We consider the function $f(x)=1/(x-\omega)$ with
$\omega+\sqrt{\omega^2-1}>1+\lambda^{-1}$. It is easily seen that
this function has a simple pole at $x=\omega$ and therefore
$\rho\leq\omega+\sqrt{\omega^2-1}-\epsilon$, where $\epsilon>0$ may
be taken arbitrary small. Using Lemma \ref{lem:Contour} and the
residue theorem, we can write the Gegenbauer coefficients of $f(x)$
as
\begin{align}\label{eq:PoleCoeff}
a_k^{\lambda} & = \frac{(-2
c_{k,\lambda})}{(\omega+\sqrt{\omega^2-1})^{k+1}} {}_2\mathrm{
F}_1\left[\begin{matrix} k + 1, & 1-\lambda
\\   k + \lambda + 1 \hspace{-1cm} &\end{matrix} ;~ \frac{1}{(\omega+\sqrt{\omega^2-1})^{2}}
\right].
\end{align}
Clearly, we see that $a_k^{\lambda}<0$ for all $k\geq0$. Moreover,
by considering the ratio $a_{k+1}^{\lambda}/a_k^{\lambda}$, it is
not difficult to verify that the sequence
$\{a_{k}^{\lambda}\}_{k=0}^{\infty}$ is strictly increasing. We now
consider the error of $S_n^{\lambda}(f)$ at the point $x=1$. Recall
the well-known inequality $\max_{|x|\leq1}|C_{k}^{\lambda}(x)|\leq
C_{k}^{\lambda}(1)$ for $\lambda>0$ and $k\in \mathbb{N}$, we obtain
that
\begin{align}
| f(1) - S_n^{\lambda}(f,1) | &= -\sum_{k=n+1}^{\infty}
a_{k}^{\lambda} C_{k}^{\lambda}(1) \geq -a_{n+1}^{\lambda}
C_{n+1}^{\lambda}(1). \nonumber
\end{align}
Combining this with \eqref{eq:LegProjBoundS3} we deduce that
\begin{align}
-a_{n+1}^{\lambda} C_{n+1}^{\lambda}(1) \leq \|f(x) -
S_n^{\lambda}(f) \|_{\infty} \leq c
\frac{n^{\lambda-\gamma}}{\rho^n}.
\end{align}
By using \eqref{eq:GenS2}, \eqref{eq:AsyGAMMA} and
\eqref{eq:PoleCoeff}, we obtain that $|a_{n+1}^{\lambda}
C_{n+1}^{\lambda}(1)|=O(n^{\lambda}
(\omega+\sqrt{\omega^2-1})^{-n})$. On the other hand, we know that
$n^{\lambda-\gamma}\rho^{-n}=O(n^{\lambda-\gamma}
(\omega+\sqrt{\omega^2-1}-\epsilon)^{-n})$. This leads to a
contradiction since the upper bound may be smaller than the lower
bound when $\epsilon$ is sufficiently small. Therefore, we can
conclude that the derived bound \eqref{eq:AnalBoundI} is optimal and
can not be improved in any negative powers of $n$. This completes
the proof.
\end{proof}

\begin{remark}
From Cheney (1998) and Bernstein (1912) we know that $\|f -
\mathcal{B}_n(f) \|_{\infty}=O(\rho^{-n})$. Comparing this with
\eqref{eq:AnalBoundI} and \eqref{eq:AnalBoundII}, it is easily seen
that the rate of convergence of $S_n^{\lambda}(f)$ is slower than
that of $\mathcal{B}_n(f)$ by a factor of $n^{\lambda}$ for
$\lambda>0$ and is the same as that of $S_n^{\lambda}(f)$ for $-1/2<
\lambda<0$, which fully explains the convergence behavior of
$S_n^{\lambda}(f)$ illustrated in Figures \ref{fig:ExamI} and
\ref{fig:ExamII}.
\end{remark}

\begin{remark}
Polynomial interpolation in the zeros of Gegenbauer polynomials is
also a powerful approach for approximating analytic functions. When
the interpolation nodes are the zeros of $C_{n+1}^{\lambda}(x)$, it
has been shown in (Xie \emph{et al.}, 2013, Theorem~4.1) that the
rate of convergence of Gegenbauer interpolation in the maximum norm
is $O(n^{\lambda} \rho^{-n})$ for $\lambda>0$ and is $O(\rho^{-n})$
if $-1/2<\lambda<0$. Comparing this with Theorem \ref{thm:RateAnal},
we see that Gegenbauer interpolation and projection of the same
degree possess the same convergence rate.
\end{remark}

\section{Optimal rates of convergence of Gegenbauer projections for
piecewise analytic functions}\label{sec:Piecewise} In this section
we study optimal rates of convergence of Gegenbauer projections for
piecewise analytic functions of class $C^{m-1}(\Omega)$ with
$m\in\mathbb{N}$. Throughout this paper, we denote by $K$ a generic
positive constant independent of $n$ which may take different values
at different places.

We first introduce the definitions of piecewise analytic functions
and the space of piecewise analytic functions of class
$C^{m-1}(\Omega)$.
\begin{definition}\label{def:PiecewiseAnal}
Let $m$ be a positive integer.
\begin{itemize}
\item[(i)] A function $f$ is said to be piecewise analytic on $\Omega$ if there
exists a set of distinct points $\{\xi_1,\ldots,\xi_{\ell}\}$ with
each $\xi_k\in(-1,1)$ and $\xi_k<\xi_{k+1}$ for $k=1,\ldots,\ell-1$
and $\ell\in\mathbb{N}$, such that the restriction of $f$ to each of
the intervals $[-1,\xi_1]$,$[\xi_1,\xi_2]$,$\ldots$,$[\xi_{\ell},1]$
has an analytic continuation to a neighborhood of this closed
interval, but $f$ itself is not analytic at each point of
$\{\xi_1,\ldots,\xi_{\ell}\}$. Moreover, we call these points
$\{\xi_1,\ldots,\xi_{\ell}\}$ the singularities of $f$.

\item[(ii)] The space of piecewise analytic functions of
class $C^{m-1}(\Omega)$ is defined to be the set of piecewise
analytic functions on $\Omega$ satisfying $f\in C^{m-1}(\Omega)$.
\end{itemize}
\end{definition}

With the above definitions, it is easily verified that these test
functions in \eqref{eq:TestFun2} are piecewise analytic functions of
class $C^{m-1}(\Omega)$ with $m=4,5,3$, respectively. We now
consider optimal convergence rates of Gegenbauer projections for
piecewise analytic functions of class $C^{m-1}(\Omega)$. First of
all, using the integral expression of Gegenbauer coefficients, we
can rewrite the Gegenbauer projection as
\begin{align}
S_n^{\lambda}(f) & = \int_{\Omega} \omega_{\lambda}(t) f(t)
D_n^{\lambda}(x,t) \mathrm{d}t,
\end{align}
where $D_n^{\lambda}(\cdot,\cdot)$ is the reproducing kernel of
Gegenbauer projection defined by
\begin{align}\label{eq:Kn}
D_n^{\lambda}(x,t) &= \sum_{k=0}^{n} \frac{C_k^{\lambda}(x)
C_k^{\lambda}(t)}{h_k^{\lambda}} \nonumber \\
&= \frac{\Gamma(\lambda)^2}{2^{2-2\lambda} \pi}
\frac{\Gamma(n+2)}{\Gamma(n+2\lambda)}
\frac{C_{n+1}^{\lambda}(x)C_{n}^{\lambda}(t) -
C_{n+1}^{\lambda}(t)C_{n}^{\lambda}(x) }{x-t},
\end{align}
and the last equation follows from the Christoffel-Darboux formula
of Gegenbauer polynomials.

The following refined estimates for the reproducing kernel will be
useful.
\begin{lemma}\label{lem:Kn}
Let $|x|\leq1$. Then, for $\lambda\neq0$ and large $n$,
\begin{enumerate}
\item[\rm (i)] If $|t|\leq1$, it holds that $|D_n^{\lambda}(x,t)| \leq K
n^{2 \max\{\lambda,0\} +1}$.

\item[\rm (ii)] If $|t|\leq 1-\varepsilon$ with $\varepsilon\in(0,1)$,
it holds that $|D_n^{\lambda}(x,t)| \leq K
n^{\max\left\{\lambda,1\right\}}$.
\end{enumerate}
\end{lemma}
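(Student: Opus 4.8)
The plan is to bound the Christoffel--Darboux expression
\[
D_n^{\lambda}(x,t) = \frac{\Gamma(\lambda)^2}{2^{2-2\lambda}\pi}\,
\frac{\Gamma(n+2)}{\Gamma(n+2\lambda)}\,
\frac{C_{n+1}^{\lambda}(x)C_{n}^{\lambda}(t) - C_{n+1}^{\lambda}(t)C_{n}^{\lambda}(x)}{x-t}
\]
by combining (a) the asymptotics of the gamma ratio and (b) size estimates for Gegenbauer polynomials, distinguishing the sign of $\lambda$. By \eqref{eq:AsyGAMMA}, $\Gamma(n+2)/\Gamma(n+2\lambda) = O(n^{2-2\lambda})$, so it remains to bound the Christoffel--Darboux difference quotient. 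For part (i), I would use the global pointwise bounds: for $\lambda>0$, $|C_k^{\lambda}(x)| \le C_k^{\lambda}(1) = \Gamma(k+2\lambda)/(\Gamma(k+1)\Gamma(2\lambda)) = O(k^{2\lambda-1})$ by Lemma~\ref{lem:gamma}, while for $-1/2<\lambda<0$, $|C_k^{\lambda}(x)| = O(k^{\lambda-1})$ by \eqref{eq:Bound2}. However, the naive bound $|D_n^{\lambda}(x,t)| \le \max_k (C_k^{\lambda})^2 \sum \ldots$ via $\sum_{k=0}^n C_k^{\lambda}(x)C_k^{\lambda}(t)/h_k^{\lambda}$ with $x=t$ is cleaner: since $D_n^{\lambda}(x,t) = \sum_{k=0}^{n} C_k^{\lambda}(x)C_k^{\lambda}(t)/h_k^{\lambda}$, Cauchy--Schwarz gives $|D_n^{\lambda}(x,t)| \le \sqrt{D_n^{\lambda}(x,x)}\sqrt{D_n^{\lambda}(t,t)}$, so it suffices to bound $D_n^{\lambda}(x,x) = \sum_{k=0}^n C_k^{\lambda}(x)^2/h_k^{\lambda}$. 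Using $1/h_k^{\lambda} = \Gamma(\lambda)^2 \Gamma(k+1)(k+\lambda)/(\pi 2^{1-2\lambda}\Gamma(k+2\lambda)) = O(k^{2-2\lambda})$ and the pointwise bounds on $C_k^{\lambda}$, the summand is $O(k^{2-2\lambda}\cdot k^{2(2\lambda-1)}) = O(k^{2\lambda})$ for $\lambda>0$, hence $D_n^{\lambda}(x,x) = O(n^{2\lambda+1})$, and $O(k^{2-2\lambda}\cdot k^{2(\lambda-1)}) = O(1)$ for $\lambda<0$, giving $D_n^{\lambda}(x,x) = O(n)$; combining, $|D_n^{\lambda}(x,t)| \le K n^{2\max\{\lambda,0\}+1}$.

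For part (ii), the gain comes from the fact that away from the endpoints, in $|t| \le 1-\varepsilon$, Gegenbauer polynomials obey the oscillatory bound $|C_n^{\lambda}(\cos\theta)| \le K n^{\lambda-1}$ uniformly for $\theta \in [\delta, \pi-\delta]$ — this is the classical Szeg\H{o} estimate (see \cite{szego1975orthogonal}), valid for $\lambda>0$; for $-1/2<\lambda<0$ the global bound \eqref{eq:Bound2} already gives $|C_n^{\lambda}(t)| = O(n^{\lambda-1})$ everywhere, which is even better. I would insert these into the Christoffel--Darboux formula. Two sub-cases for the variable $x$: if $x$ is also bounded away from $\pm 1$, then $|C_{n+1}^{\lambda}(x)|, |C_n^{\lambda}(x)| = O(n^{\lambda-1})$ and $|x-t|$ could be small, so the difference quotient must be handled via the confluent form of Christoffel--Darboux (the derivative bound) — equivalently, $D_n^{\lambda}(x,t) = \sum_{k=0}^n C_k^{\lambda}(x)C_k^{\lambda}(t)/h_k^{\lambda}$ with each summand $O(n^{2-2\lambda}\cdot n^{\lambda-1}\cdot n^{\lambda-1}) = O(1)$ for $\lambda>0$, giving $O(n)$; if $x$ is near an endpoint, say $x$ close to $1$, then $|x-t| \ge \varepsilon > 0$ is bounded below, so $|D_n^{\lambda}(x,t)| \le K n^{2-2\lambda}\cdot|C_{n+1}^{\lambda}(x)C_n^{\lambda}(t)| + (\text{symmetric})$, and using $|C_n^{\lambda}(x)| = O(n^{2\lambda-1})$ at the endpoint but $|C_n^{\lambda}(t)| = O(n^{\lambda-1})$ interior, the product is $O(n^{2-2\lambda}\cdot n^{2\lambda-1}\cdot n^{\lambda-1}) = O(n^{\lambda})$. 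Taking the max over the regimes for $x$ yields $|D_n^{\lambda}(x,t)| \le K n^{\max\{\lambda,1\}}$ for $\lambda > 0$, and $|D_n^{\lambda}(x,t)| \le Kn$ (from the summand bound $O(n^{2-2\lambda}\cdot n^{2(\lambda-1)}) = O(n^{-1})$ times $n$ terms, i.e.\ $O(1)$... actually $O(n\cdot n^{-1})$, giving $O(1)$, so even better than $n$) for $\lambda<0$, consistent with the stated bound.

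The main obstacle is the case where both $x$ and $t$ lie in the interior and $|x-t|$ is small, since then the Christoffel--Darboux numerator vanishes to first order and one cannot simply bound $|C_{n+1}^{\lambda}(x)C_n^{\lambda}(t) - C_{n+1}^{\lambda}(t)C_n^{\lambda}(x)|$ by the triangle inequality after dividing by $x-t$. The clean way around this is to avoid the difference quotient entirely and work from the sum representation $D_n^{\lambda}(x,t) = \sum_{k=0}^n C_k^{\lambda}(x)C_k^{\lambda}(t)/h_k^{\lambda}$, bounding it termwise by the uniform interior estimates; the $x$-near-endpoint regime, where $|x-t|$ is automatically bounded below, is then handled separately via the Christoffel--Darboux closed form. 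A secondary technical point is the transition region $|x| \in (1-\varepsilon, 1)$ versus $|x|$ near $1$: one should either absorb it into the endpoint estimate (using that the endpoint bound $C_n^{\lambda}(1) = O(n^{2\lambda-1})$ dominates the interior one) or invoke a uniform Mehler--Heine / Bessel-type asymptotic near the endpoint; since we only need an $O(n^{\max\{\lambda,1\}})$ upper bound and not a sharp constant, the crude endpoint bound suffices. Finally, all the $\lambda$-dependent constants from Lemma~\ref{lem:gamma} and \eqref{eq:AsyGAMMA} are absorbed into the generic constant $K$.
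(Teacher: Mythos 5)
Your proposal is correct and follows essentially the same route as the paper: part (i) by termwise bounding of the kernel sum using the global estimates $|C_k^{\lambda}|=O(k^{2\lambda-1})$ for $\lambda>0$ (resp.\ $O(k^{\lambda-1})$ for $\lambda<0$) together with $1/h_k^{\lambda}=O(k^{2-2\lambda})$, and part (ii) by the same dichotomy the paper uses ($|x-t|$ small, forcing both points into the interior so the sum is bounded termwise by $O(1)$ per term, versus $|x-t|$ bounded below, where the Christoffel--Darboux closed form with the interior bound on $C_n^{\lambda}(t)$ gives $O(n^{\max\{\lambda,0\}})$). The only quibble is a harmless arithmetic slip at the very end: for $\lambda<0$ in part (ii) the summand is $O(k^{2-2\lambda}\cdot k^{2(\lambda-1)})=O(1)$, not $O(n^{-1})$, so that case yields $O(n)$ rather than $O(1)$ --- still within the claimed bound $Kn^{\max\{\lambda,1\}}$.
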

\begin{proof}
We first consider part (i). From Lemma \ref{lem:Bound} we see that
\begin{align}\label{eq:AsyMaxGen}
\max_{|x|\leq1}|C_n^{\lambda}(x)| = \left\{
            \begin{array}{ll}
{\displaystyle  O(n^{2\lambda-1}) }, & \hbox{$\lambda>0$,}   \\[8pt]
{\displaystyle  O(n^{\lambda-1}) }, & \hbox{$-1/2<\lambda<0$.}
            \end{array}
            \right.
\end{align}
Moreover, using \eqref{eq:AsyGAMMA} we have
$h_n^{\lambda}=O(n^{2\lambda-2})$. Combining these estimates we find
that
\begin{align}
|D_n^{\lambda}(x,t)| \leq \sum_{k=0}^{n} \frac{|C_k^{\lambda}(x)
C_k^{\lambda}(t)|}{h_k^{\lambda}} = \sum_{k=0}^{n} O(k^{2
\max\{\lambda,0\}}) = O(n^{2 \max\{\lambda,0\}+1}). \nonumber
\end{align}
This proves part (i). To prove part (ii), we distinguish two cases:
$|x-t|<\varepsilon/2$ and $|x-t|\geq\varepsilon/2$. For the case
$|x-t|<\varepsilon/2$, it is easily verified that $|x|\leq
1-\varepsilon/2$. Recall from Szeg\H{o} (1939) that
$|C_n^{\lambda}(x)| = O(n^{\lambda-1})$ for $x\in(-1,1)$, we obtain
\begin{align}
\max_{\substack{|x|\leq 1-\varepsilon/2\\ |t|\leq 1 - \varepsilon}}
\frac{|C_k^{\lambda}(x) C_k^{\lambda}(t)|}{h_k^{\lambda}} = O(1),
\nonumber
\end{align}
and thus
\begin{align}
|D_n^{\lambda}(x,t)| \leq \sum_{k=0}^{n} \frac{|C_k^{\lambda}(x)
C_k^{\lambda}(t)|}{h_k^{\lambda}} = \sum_{k=0}^{n} O(1) = O(n).
\nonumber
\end{align}
Next, we consider the case $|x-t|\geq \varepsilon/2$. Combining the
estimate $\max_{|t|\leq1-\varepsilon} |C_n^{\lambda}(t)| =
O(n^{\lambda-1})$ with \eqref{eq:AsyMaxGen}, and the last equality
in \eqref{eq:Kn}, we immediately infer that
\begin{align}
|D_n^{\lambda}(x,t)| = O(n^{\max\{\lambda,0\}}). \nonumber
\end{align}
A combination of the above two estimates gives part (ii). This
completes the proof.
\end{proof}

Now, we prove the main result of this section.
\begin{theorem}\label{thm:PieceRate}
If $f$ belongs to the space of piecewise analytic functions of class
$C^{m-1}(\Omega)$ for some $m\in\mathbb{N}$. Then, for $\lambda<m+1$
and $n\gg1$,
\begin{align}\label{eq:DiffRate}
\|f - S_n^{\lambda}(f)\|_{\infty} \leq K \left\{
            \begin{array}{ll}
{\displaystyle n^{-m} },   & \hbox{$\lambda\leq1$,}   \\[8pt]
{\displaystyle n^{-m-1+\lambda} }, & \hbox{$\lambda>1$.}
            \end{array}
            \right.
\end{align}
Moreover, the convergence rates on the right-hand side of
\eqref{eq:DiffRate} are optimal in the sense that they can not be
improved further.
\end{theorem}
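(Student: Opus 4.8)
The plan is to split the analysis according to the size of $\lambda$ and to exploit the integral representation $S_n^\lambda(f,x)=\int_{-1}^1 \omega_\lambda(t) f(t) D_n^\lambda(x,t)\,\mathrm{d}t$ together with the refined kernel bounds of Lemma~\ref{lem:Kn}. The key observation is the standard reproduction property: $S_n^\lambda$ fixes polynomials of degree $\le n$, so for any $p\in\mathcal{P}_n$ one has $f-S_n^\lambda(f)=(f-p)-S_n^\lambda(f-p)$, hence
\[
\|f-S_n^\lambda(f)\|_\infty \le \|f-p\|_\infty + \Big\| \int_{-1}^1 \omega_\lambda(t)\,(f-p)(t)\,D_n^\lambda(x,t)\,\mathrm{d}t \Big\|_\infty.
\]
The first term is controlled by Jackson-type estimates: since $f\in C^{m-1}[-1,1]$ is piecewise analytic, its best polynomial approximation (or a concrete near-best polynomial) satisfies $\|f-\mathcal B_n(f)\|_\infty = O(n^{-m})$. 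So the whole issue is bounding the second term, i.e. controlling how the Dirichlet kernel acts on a function that is small but whose derivatives may blow up near the singularities $\xi_j$ and near the endpoints $\pm1$.

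First I would treat the case $\lambda\le 1$. Here Lemma~\ref{lem:Kn}(ii) gives $|D_n^\lambda(x,t)|\le K n^{\max\{\lambda,1\}}=Kn$ away from the endpoints, while (i) gives the cruder global bound $|D_n^\lambda(x,t)|\le Kn^{2\max\{\lambda,0\}+1}\le Kn^3$. The strategy is to choose $p$ to be a good local approximant of $f$ and to dissect $[-1,1]$ into (a) a union of shrinking intervals of width $\sim 1/n$ around the singular points $\xi_1,\dots,\xi_\ell$, (b) two endpoint layers $[-1,-1+\varepsilon_n]\cup[1-\varepsilon_n,1]$ with $\varepsilon_n\sim 1/n$, and (c) the remaining "good" set where $|t|\le 1-\varepsilon_n$ and $t$ is bounded away from every $\xi_j$. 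On the good set one integrates against the $O(n)$ kernel bound; because $(f-p)(t)=O(n^{-m})$ there and $\int \omega_\lambda(t)|D_n^\lambda(x,t)|\,\mathrm{d}t$ contributes only logarithmically more (or one can absorb the extra factor by choosing $p$ slightly better, using that $f$ is analytic on each closed subinterval of the good set so $\|f-p\|$ there is exponentially small), this piece is $O(n^{-m})$. On the singular layers and endpoint layers one uses the cruder kernel bound, the fact that the layers have measure $O(1/n)$, and that $(f-p)(t)=O(|t-\xi_j|^{?})$ or $O(\varepsilon_n^{?})$ with enough vanishing (here one uses $f\in C^{m-1}$, so $f-p$ together with the Taylor polynomial of $f$ vanishes to order $m$, giving a factor $n^{-m}$ from the layer width raised to the right power); a careful bookkeeping of the powers of $n$ from (kernel size)$\times$(layer width)$\times$(smallness of $f-p$) yields the claimed $O(n^{-m})$.

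For $\lambda>1$ the same dissection is used, but now Lemma~\ref{lem:Kn}(ii) only gives $|D_n^\lambda(x,t)|\le K n^\lambda$ on the good set, which is worse than $n$ by the factor $n^{\lambda-1}$, and this is exactly where the extra $n^{\lambda-1}$ in the statement comes from. The endpoint layers, where one must use the global bound $O(n^{2\lambda+1})$, need special care: one shrinks them to width $O(n^{-2})$ or uses the Rodrigues formula \eqref{eq:Rodrigues} to integrate by parts $m$ times near the endpoints, trading derivatives for powers of $n$ and using that the boundary terms vanish because $f\in C^{m-1}$; this is the standard device to show the endpoint contribution is not the dominant one. Finally, for the optimality (lower bound) claim I would argue as in the proof of Theorem~\ref{thm:RateAnal}: pick a concrete model function, e.g. $f(x)=(x-\xi)_+^{m}$ for an interior point $\xi$ (which lies in $C^{m-1}$ and is piecewise analytic), compute or estimate its Gegenbauer coefficients $a_k^\lambda$ via Lemma~\ref{lem:Contour} or the Rodrigues formula, show $a_k^\lambda \sim c\, k^{\lambda-m-2}$ up to oscillation, and evaluate the tail $\sum_{k>n} a_k^\lambda C_k^\lambda(x)$ at a point (say $x=1$ using \eqref{eq:Bound1}, or an interior point where $C_k^\lambda$ is of order $k^{\lambda-1}$) to produce a lower bound of order $n^{-m}$ when $\lambda\le1$ and $n^{\lambda-m-1}$ when $\lambda>1$, matching the upper bound. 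The main obstacle I anticipate is the endpoint analysis for $\lambda>1$: the kernel is genuinely large there ($n^{2\lambda+1}$), and getting the endpoint contribution down to $O(n^{\lambda-m-1})$ requires the repeated integration-by-parts with the Rodrigues formula together with sharp asymptotics $h_n^\lambda=O(n^{2\lambda-2})$ and $\max|C_n^\lambda|=O(n^{2\lambda-1})$, done carefully enough to see that no spurious power of $n$ survives.
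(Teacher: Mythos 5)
Your overall architecture (reproduction of polynomials, dissection of $[-1,1]$, the refined kernel bounds of Lemma~\ref{lem:Kn}, and the observation that the factor $n^{\lambda-1}$ for $\lambda>1$ comes from the interior kernel bound $n^{\max\{\lambda,1\}}$) matches the paper's, and your optimality argument via a truncated power function evaluated at $x=1$ is essentially the paper's as well. But there is a genuine gap in the upper bound: you control $f-p$ only through the uniform Jackson estimate $\|f-p\|_\infty=O(n^{-m})$. With that alone, the second term is bounded by $Cn^{-m}\int_{-1}^1\omega_\lambda(t)|D_n^\lambda(x,t)|\,\mathrm{d}t$, i.e.\ by $n^{-m}$ times the Lebesgue function, which is $O(n^{\lambda})$ for $\lambda>0$; this only reproduces \eqref{eq:errorG}--\eqref{eq:Lebesgue} and does not give $O(n^{-m})$ for $0<\lambda\le1$ or $O(n^{-m-1+\lambda})$ for $\lambda>1$. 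Your proposed fixes do not close this: the best (or any near-best) polynomial does not vanish to order $m$ at the singularities, so the claimed extra smallness of $f-p$ on $O(1/n)$-layers is unjustified, and the parenthetical ``choose $p$ slightly better so that it is exponentially accurate on the good set'' is precisely the nontrivial fact you would need to prove. The paper's key ingredient, which your plan is missing, is the Saff--Totik theorem \cite[Theorem~3]{saff1989poly}: there is a \emph{single} degree-$n$ polynomial $q_n$ with the pointwise bound $|f(x)-q_n(x)|\le Cn^{-m}e^{-c(n\,d(x))^{\eta}}$, $d(x)$ the distance to the nearest singularity. This localizes $f-q_n$ to $O(1/n)$-neighborhoods of the $\xi_j$; integrating $e^{-c(n|t-\xi_j|)^\eta}$ over a \emph{fixed} interior interval $I_j$ produces the factor $n^{-1}$ that, multiplied by the kernel bound $n^{\max\{\lambda,1\}}$ of Lemma~\ref{lem:Kn}(ii), gives exactly the loss $n^{\max\{\lambda,1\}-1}$, while the complement is killed by $e^{-c(n\epsilon)^\eta}$ against the crude bound of Lemma~\ref{lem:Kn}(i).

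Two smaller points. Since the singularities in Definition~\ref{def:PiecewiseAnal} are interior, the endpoints carry no singularity of $f$ and are handled by the exponentially small factor; the Rodrigues-formula integration by parts you propose in endpoint layers is unnecessary (that difficulty is an artifact of the uniform-only bound). And in your optimality sketch the coefficient decay should be $a_k^\lambda\sim c\,k^{-\lambda-m}$ up to oscillation (so that $a_k^\lambda C_k^\lambda(1)\sim k^{\lambda-m-1}$), not $k^{\lambda-m-2}$; for $\lambda\le1$ no coefficient computation is needed at all, since $\|f-S_n^\lambda(f)\|_\infty\ge\|f-\mathcal{B}_n(f)\|_\infty\asymp n^{-m}$ already matches the upper bound.
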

\begin{proof}
Assume that $\{\xi_1,\ldots,\xi_{\ell}\}$, with $\ell\in\mathbb{N}$,
are the singularities of $f$. For every $\eta\in(0,1)$, we know from
Saff \& Totik (1989) that there exists a polynomial $\psi_n$ of
degree $n$ such that
\begin{align}\label{eq:SaffBound}
|f(x) - \psi_n(x) | \leq \frac{C}{n^{m}} \exp\left(-\kappa(n
\mathrm{d}(x))^{\eta} \right), \quad \forall x\in\Omega,
\end{align}
where $\mathrm{d}(x)=\min_{1\leq k\leq\ell}|x-\xi_k|$ and $C,\kappa$
are some positive constants. Recall that $S_n^{\lambda}(f) \equiv f$
whenever $f\in\mathcal{P}_n$, we immediately obtain
\begin{align}\label{eq:ModFunS1}
|f(x) - S_n^{\lambda}(f,x)| &= |f(x) - \psi_n(x) -
S_n^{\lambda}(f-\psi_n,x)|
\nonumber \\
&\leq |f(x) - \psi_n(x)| + |S_n^{\lambda}(f-\psi_n,x)|
\nonumber \\
&\leq \frac{C}{n^{m}} \exp\left(-\kappa (n \mathrm{d}(x))^{\eta}
\right) \nonumber \\
&~~~~~~ + \frac{C}{n^{m}} \int_{\Omega} \exp\left(-\kappa (n
\mathrm{d}(t))^{\eta} \right) \omega_{\lambda}(t) |
D_n^{\lambda}(x,t) | \mathrm{d}t.
\end{align}
We now consider the estimate of the last integral in
\eqref{eq:ModFunS1}. For simplicity of notation we denote it by $I$.
Moreover, let $\Omega_k=[\xi_k-\gamma,\xi_k+\gamma]$, where
$k=1,\ldots,\ell$ and $\gamma>0$ is chosen such that these
subintervals $\Omega_1,\ldots,\Omega_{\ell}\subset(-1,1)$ are
pairwise disjoint 
and thus,
\begin{align}\label{eq:ModFunS2}
I &= \sum_{k=1}^{\ell} \int_{\Omega_k} \exp\left(-\kappa
(n\mathrm{d}(t))^{\eta}\right) \omega_{\lambda}(t)
|D_n^{\lambda}(x,t)| \mathrm{d}t
\nonumber \\
&~~~~~~~~ + \int_{\Omega\backslash\bigcup_{k=1}^{\ell} \Omega_k }
\exp\left(-\kappa (n\mathrm{d}(t))^{\eta}\right) \omega_{\lambda}(t)
|D_n^{\lambda}(x,t)| \mathrm{d}t.
\end{align}
Let $I_1$ and $I_2$ denote the first and second terms on the
right-hand side of \eqref{eq:ModFunS2}, respectively. For $I_1$,
notice that $\mathrm{d}(t)=|t-\xi_k|$ whenever $t\in\Omega_{k}$, and
thus from Lemma \ref{lem:Kn} we have
\begin{align}\label{eq:ModFunS3}
I_1 &\leq \max_{t\in\bigcup_{k=1}^{\ell}\Omega_{k}}
|\omega_{\lambda}(t)| K n^{\max\{\lambda,1\}} \sum_{k=1}^{\ell}
\int_{\Omega_k} \exp\left(-\kappa(n\mathrm{d}(t))^{\eta}\right)
\mathrm{d}t \nonumber \\
&= \max_{t\in\bigcup_{k=1}^{\ell}\Omega_{k}} |\omega_{\lambda}(t)|
(2\ell K) n^{\max\{\lambda,1\}-1} \int_{0}^{n\gamma}
\exp\left(-\kappa \nu^{\eta}\right)
\mathrm{d}\nu \nonumber \\
&= O(n^{\max\{\lambda,1\}-1}),
\end{align}
where we have applied the change of variable $t=\xi_k+\nu/n$ in the
second step. For $I_2$, notice that $\mathrm{d}(t)\geq\gamma$
whenever $t\in\Omega\backslash\bigcup_{k=1}^{\ell}\Omega_k$, and
using Lemma \ref{lem:Kn} again, we obtain
\begin{align}\label{eq:ModFunS4}
I_2 &\leq \exp\left(-\kappa (n\gamma)^{\eta}\right) \int_{\Omega}
\omega_{\lambda}(t) | D_n^{\lambda}(x,t) | \mathrm{d}t \nonumber \\
&\leq \exp\left(-\kappa(n\gamma)^{\eta}\right) K
n^{2\max\{\lambda,0\}+1}
\int_{\Omega} \omega_{\lambda}(t) \mathrm{d}t \nonumber \\
&= O(\exp\left(-\kappa(n\gamma)^{\eta}\right)
n^{2\max\{\lambda,0\}+1}).
\end{align}
Combining \eqref{eq:ModFunS1}, \eqref{eq:ModFunS2},
\eqref{eq:ModFunS3} and \eqref{eq:ModFunS4}
gives \eqref{eq:DiffRate}. 

We now turn to prove the optimality of the convergence rates on the
right-hand side of \eqref{eq:DiffRate}. Recall that $\|f -
\mathcal{B}_n(f)\|_{\infty}=O(n^{-m})$ (see, e.g., Timan, 1963,
Chapter 7). In the case $\lambda\leq1$, the rate of convergence of
$S_n^{\lambda}(f)$ is obviously optimal since it is the same as that
of $\mathcal{B}_n(f)$. In the case $\lambda>1$, the predicted
convergence rate is $\|f -
S_n^{\lambda}(f)\|_{\infty}=O(n^{-m-1+\lambda})$. To show the
optimality of this rate, we consider a specific example
$f(x)=(x)_{+}^5$, which corresponds to $m=5$. In view of (Olver
\emph{et al.}, 2010, Equation~(18.17.37)), the Gegenbauer
coefficients of this function are given by
\begin{align}
a_k^{\lambda} &= \frac{15}{8} \frac{\Gamma(\lambda) (k +
\lambda)}{\Gamma(\lambda + \frac{k+7}{2}) \Gamma(\frac{7-k}{2})},
\quad k=0,1,\ldots,
\end{align}
from which we can see that $a_{2k+1}^{\lambda}=0$ for $k\geq3$. For
$k\geq6$ is even, we have, using \eqref{eq:reflection} and
\eqref{eq:AsyGAMMA},
\begin{align}\label{eq:diffS1}
a_{k}^{\lambda} &= (-1)^{\frac{k}{2}+1}
\frac{15\Gamma(\lambda)}{8\pi} \frac{(k+\lambda)
\Gamma(\frac{k-5}{2})}{\Gamma(\frac{k+7}{2}+\lambda)} =
(-1)^{\frac{k}{2}+1} \frac{15\Gamma(\lambda)}{4\pi}
\left(\frac{k}{2} \right)^{-\lambda-5} + O(k^{-\lambda-6}).
\end{align}
Now we consider the error estimate of $S_n^{\lambda}(f)$ at $x=1$.
Assume that $n\geq6$ is a large even integer, using
\eqref{eq:diffS1} and the asymptotic estimate $C_{k}^{\lambda}(1) =
k^{2\lambda-1}/\Gamma(2\lambda)+ O(k^{2\lambda-2})$, we obtain that
\begin{align}
f(1) - S_n^{\lambda}(f,1) &= \sum_{k=1}^{\infty}
a_{n+2k}^{\lambda} C_{n+2k}^{\lambda}(1) \nonumber \\
&\sim (-1)^{\frac{n}{2}+1} \frac{15\Gamma(\lambda)
2^{\lambda+3}}{\pi \Gamma(2\lambda)} n^{\lambda-6}
\sum_{k=1}^{\infty} (-1)^k \left(1 +
\frac{2k}{n} \right)^{\lambda-6} \nonumber \\
&= O(n^{\lambda-6}),  \qquad  n\gg1, \nonumber
\end{align}
where in the last step we have used the fact that the alternating
series is always bounded for $\lambda<6$. Similarly, it is not
difficult to show that $f(1)-S_n^{\lambda}(f,1)=O(n^{\lambda-6})$ if
$n\geq6$ is a large odd integer. Since $\|f -
S_n^{\lambda}(f)\|_{\infty} \geq |f(1) - S_n^{\lambda}(f,1)|$, we
can conclude that the predicted rate $\|f -
S_n^{\lambda}(f)\|_{\infty} = O(n^{\lambda-6})$ is optimal. This
completes the proof.
\end{proof}

In order to verify the convergence rates predicted by Theorem
\ref{thm:PieceRate}, we consider the test functions in
\eqref{eq:TestFun2}, which correspond to $m=4,5,3$, respectively.
From Theorem \ref{thm:PieceRate} we know that the predicted rate of
$S_n^{\lambda}(f_4)$ is $O(n^{-4})$ if $\lambda\leq1$ and is
$O(n^{\lambda-5})$ if $\lambda>1$, and the predicted rate of
$S_n^{\lambda}(f_5)$ is $O(n^{-5})$ if $\lambda\leq1$ and is
$O(n^{\lambda-6})$ if $\lambda>1$, and the predicted rate of
$S_n^{\lambda}(f_6)$ is $O(n^{-3})$ if $\lambda\leq1$ and is
$O(n^{\lambda-4})$ if $\lambda>1$. For each $f_j$, we test the
convergence rates of $S_n^{\lambda}(f_j)$ with four values of
$\lambda$ and they are displayed in Figure \ref{fig:ExamV}. Clearly,
for each $\lambda$, we see that the actual convergence rate of
$S_n^{\lambda}(f)$ coincides quite well with the predicted rate.
Moreover, these results also explain the observations in Figures
\ref{fig:ExamIII} and \ref{fig:ExamIV} since the convergence rates
of $\mathcal{B}_n(f)$ for $f_4,f_5$ and $f_6$ are
$O(n^{-4}),O(n^{-5})$ and $O(n^{-3})$, respectively.

\begin{figure}[ht]
\centering
\includegraphics[width=.325\textwidth,height=5cm]{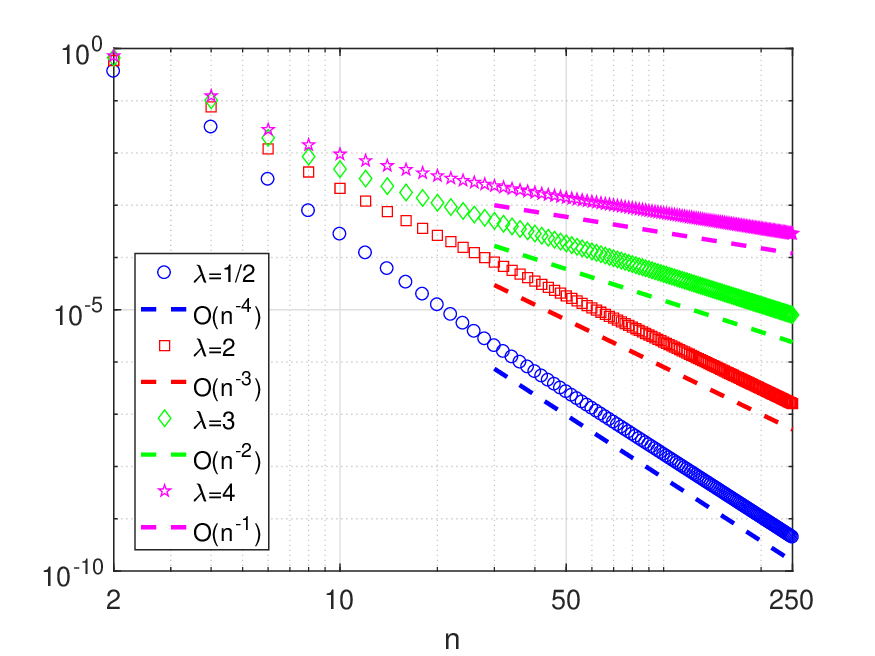}~
\includegraphics[width=.325\textwidth,height=5cm]{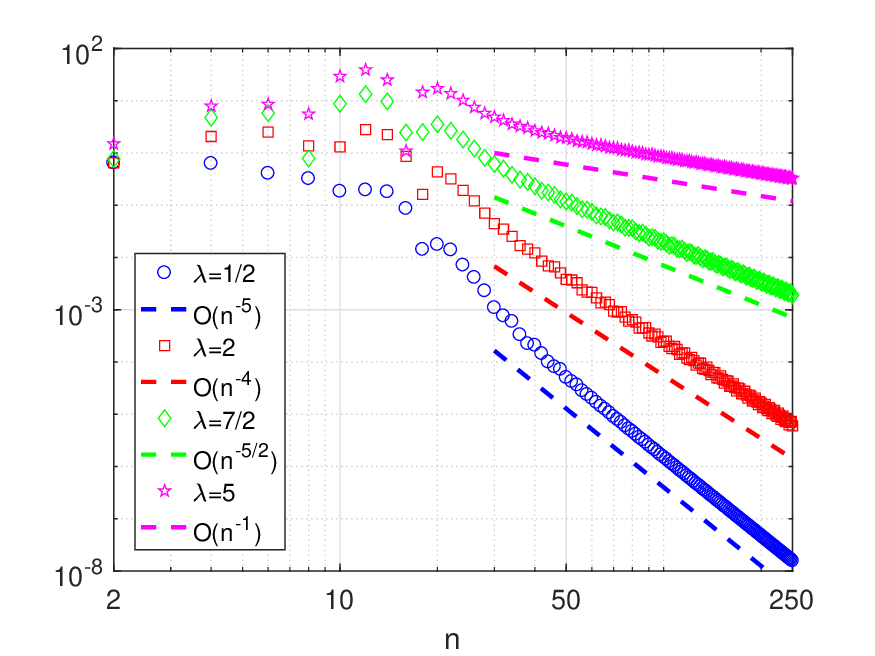}
\includegraphics[width=.325\textwidth,height=5cm]{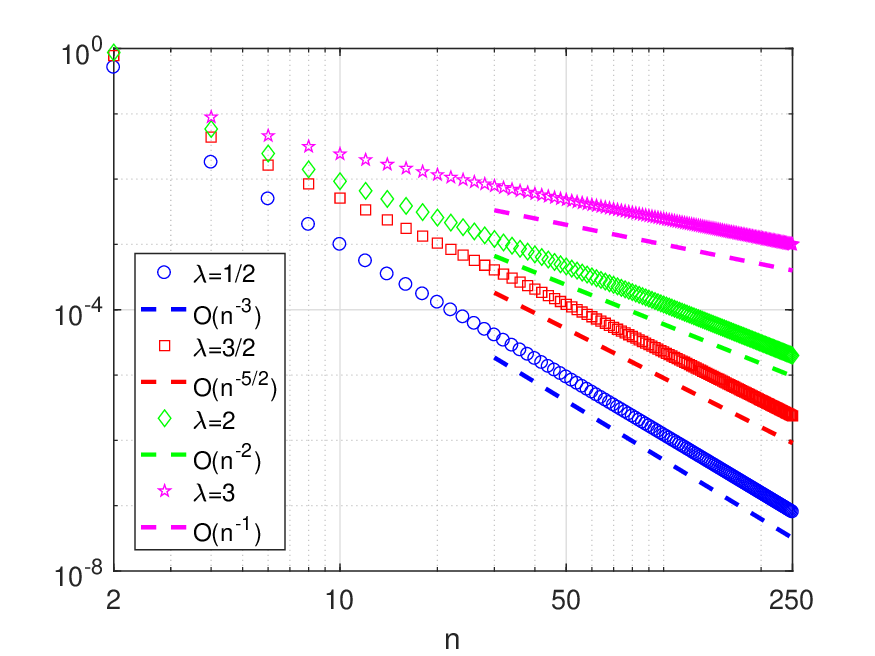}
\caption{The maximum errors of $S_n^{\lambda}(f_4)$ (left),
$S_n^{\lambda}(f_5)$ (middle) and $S_n^{\lambda}(f_6)$ (right).
Dashed lines indicate the convergence rates predicted by Theorem
\ref{thm:PieceRate}.} 
\label{fig:ExamV}
\end{figure}

\section{Optimal rates of convergence of Gegenbauer projections for
functions with algebraic singularities}\label{sec:fractional} In
this section we consider optimal rates of convergence of Gegenbauer
projections for functions with algebraic singularities.
Specifically, we divide our discussion into two cases: (i) functions
with an interior singularity; (ii) functions with an endpoint
singularity. For ease of clarity and conciseness, we restrict
ourselves to the following model function
\begin{align}\label{def:Model}
f(x)=|x-\theta|^{\alpha},
\end{align}
where $\theta\in\Omega$ and $\alpha>0$ is not an even integer
whenever $\theta\in(-1,1)$ and is not an integer whenever
$\theta=\pm1$. The convergence rate results will shed light on the
study of more complicated functions with algebraic singularities.

\begin{remark}
Although we restrict ourselves to the model function
\eqref{def:Model}, the extension to more general functions involving
one or more singularities of $|x-\theta|^{\alpha}$-type, such as $
f(x) = \sum_{k=1}^{m} |x-\theta_k|^{\alpha_k} g_k(x)$, where
$-1\leq\theta_1<\cdots<\theta_m\leq1$ and $\alpha_k>0$ are not
integers and $g_k(x)$ are sufficiently smooth, is straightforward.
Moreover, for functions of the form $f(x) = g(x)\prod_{k=1}^{m}
|x-\theta_k|^{\alpha_k}$, where $g(x)$ is sufficiently smooth, by
noticing that they can also be decomposed into a sum of $m$
functions and each function contains exactly one singularity of
$|x-\theta|^{\alpha}$-type (Tuan \& Elliott, 1972), our analysis can
also be applied to handle such functions.
\end{remark}

\subsection{The case $\theta\in(-1,1)$}
In the case where $\alpha$ is an odd integer, note that $f$ belongs
to the space of piecewise analytic functions of class
$C^{\alpha-1}(\Omega)$, and thus the optimal rate of convergence of
$S_n^{\lambda}(f)$ follows immediately from Theorem
\ref{thm:PieceRate}. In the case where $\alpha$ is not an integer,
however, Theorem \ref{thm:PieceRate} can not be used and a new
approach for error estimates of $S_n^{\lambda}(f)$ should be
developed.

Before we proceed, let us consider the location of the maximum error
of $S_n^{\lambda}(f)$. In the particular case $\lambda=1/2$, which
corresponds to Legendre projections, it has been observed in Wang
(2021) that the maximum error is attained at $x=\theta$. For the
Gegenbauer case, however, the situation may be complicated and it is
highly interesting to clarify the dependence of the location of the
maximum error on the parameter $\lambda$. To gain some insight, we
plot in Figure \ref{fig:ExamVI} the pointwise error of
$S_n^{\lambda}(f)$ with three values of $\lambda$. Clearly, we
observe that, for $\lambda$ greater than a critical value, the
location of the maximum error of $S_n^{\lambda}(f)$ will jump from
$x=\theta$ to one of the endpoints $x=1$ or $x=-1$. Motivated by
this observation, we shall consider the pointwise error of
$S_n^{\lambda}(f)$ and then clarify the maximum error of
$S_n^{\lambda}(f)$.

\begin{figure}[ht]
\centering
\includegraphics[width=4.9cm,height=4.3cm]{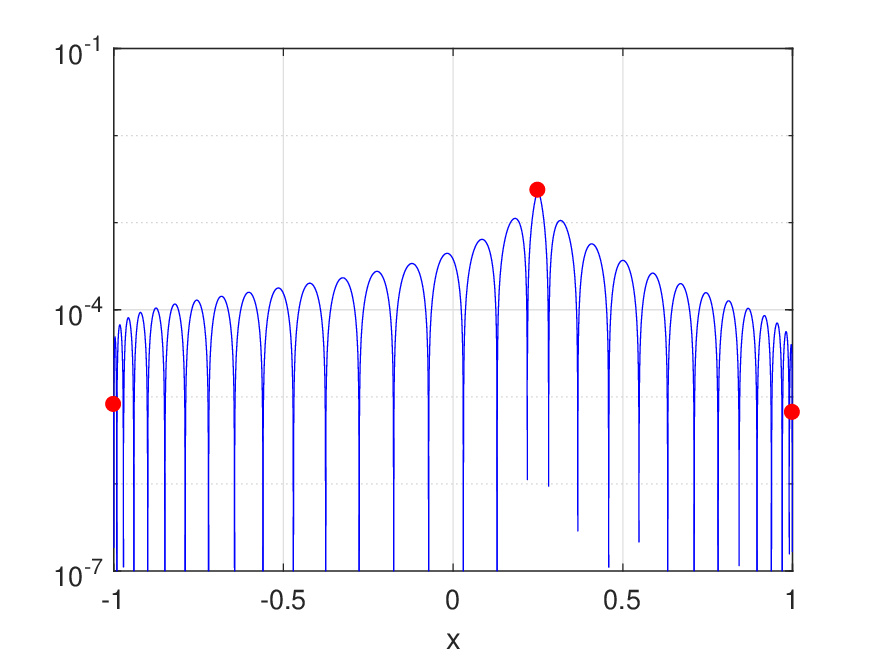}~
\includegraphics[width=4.9cm,height=4.3cm]{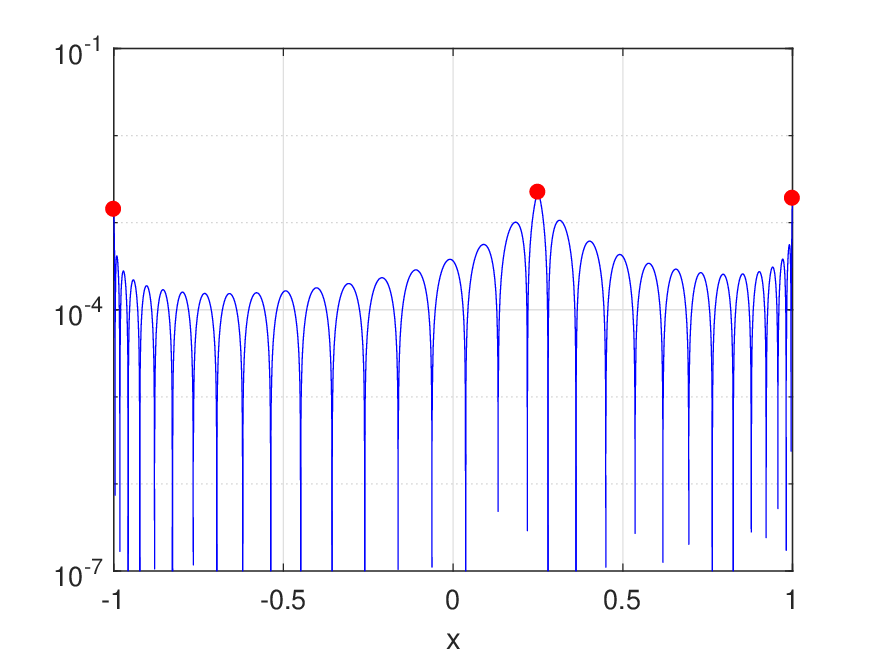}~
\includegraphics[width=4.9cm,height=4.3cm]{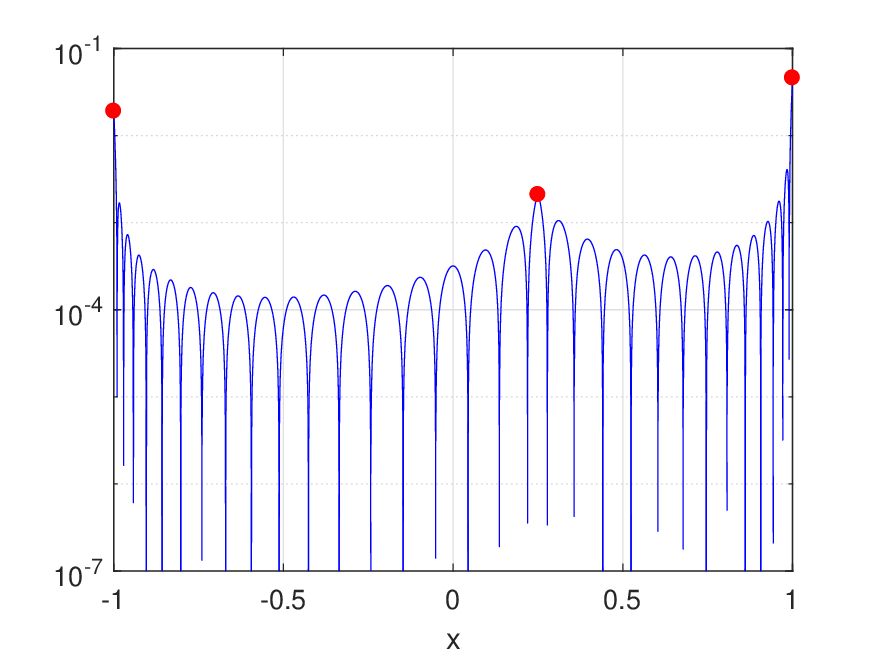}
\caption{Pointwise error of $S_n^{\lambda}(f)$ for $\lambda=-2/5$
(left), $\lambda=3/4$ (middle) and $\lambda=2$ (right). Here
$f(x)=|x-1/4|^{3/2}$ and $n=30$. These red points are the errors of
$S_n^{\lambda}(f)$ at the critical points, i.e., $x=\theta,\pm1$.}
\label{fig:ExamVI}
\end{figure}

We start with the following result.
\begin{lemma}\label{lem:IntSing}
Let $f$ be defined by \eqref{def:Model} with $\theta\in(-1,1)$ and
let $\alpha>0$ be not an even integer.
\begin{itemize}
\item[\rm (i)] For each $k\geq\alpha+1$\footnote{This condition is imposed
here due to the definition of generalized Gegenbauer functions
proposed in (Liu \emph{et al.}, 2019, Definition~2.1). However,
numerical tests show that the formula \eqref{eq:ExpFormula} is valid
for all $k\geq0$. To keep the proof concise, we will not pursue this
here.},
\begin{align}\label{eq:ExpFormula}
a_k^{\lambda} &= \omega_{\lambda+\alpha+1}(\theta)
\frac{\Gamma(\lambda) \Gamma(\alpha+1) (k+\lambda)}{2^{1+\alpha}
\Gamma(\lambda+\alpha+\frac{3}{2}) \sqrt{\pi}} \left( {}_2\mathrm{
F}_1\left[\begin{matrix} \alpha+1-k, & k+2\lambda+\alpha+1
\\   \alpha + \lambda + \frac{3}{2} \hspace{-1cm} &\end{matrix} ;~ \frac{1-\theta}{2} \right]
\right. \nonumber \\[8pt]
&~~~~~~~~~~ \left.  + ~(-1)^k {}_2\mathrm{ F}_1\left[\begin{matrix}
\alpha+1-k, & k+2\lambda+\alpha+1
\\   \alpha + \lambda + \frac{3}{2} \hspace{-1cm} &\end{matrix} ; ~ \frac{1+\theta}{2} \right]
\right).
\end{align}

\item[\rm (ii)] As $k\rightarrow\infty$,
\begin{align}\label{eq:AsymInt}
a_k^{\lambda} &=
-\omega_{\frac{\lambda+\alpha+1}{2}}(\theta)\sin\left(\frac{\alpha\pi}{2}\right)
\frac{2^{1+\lambda}\Gamma(\lambda)\Gamma(\alpha+1)}{\pi
k^{\alpha+\lambda}} \cos\left(2(k+\lambda)\phi(\theta) -
\frac{\lambda\pi}{2} \right) \nonumber \\
&~~~~~ + O(k^{-\alpha-\lambda-1}),
\end{align}
where $\phi(\theta)=\arccos(\sqrt{(1+\theta)/2})$.
\end{itemize}
\end{lemma}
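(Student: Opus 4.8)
The plan is to establish the exact formula \eqref{eq:ExpFormula} first and then extract the asymptotics \eqref{eq:AsymInt} from it via the large-degree behavior of the two hypergeometric functions. For the exact formula, I would split the defining integral $a_k^\lambda = (h_k^\lambda)^{-1}\int_{-1}^1 \omega_\lambda(x) C_k^\lambda(x) |x-\theta|^\alpha\,dx$ at the singularity $x=\theta$, writing it as a sum of two one-sided integrals over $[-1,\theta]$ and $[\theta,1]$. On each piece the integrand is smooth, so each contributes a value expressible through the connection of Gegenbauer polynomials to Jacobi-type functions; the natural device is the theory of generalized Gegenbauer functions of \cite[Definition~2.1]{liu2019optimal} (hence the standing assumption $k\geq\alpha+1$), which evaluates integrals of this form against a weight raised to a fractional power. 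Substituting $x=\theta+(1-\theta)(1-t)/2$ on one piece and $x=\theta-(1+\theta)(1-t)/2$ on the other turns each integral into a standard beta-type integral whose value is a Gauss ${}_2F_1$ evaluated at $(1-\theta)/2$ and $(1+\theta)/2$ respectively; the $(-1)^k$ in the second term comes from the parity relation \eqref{eq:symmetry}. Collecting the prefactors, using the duplication formula \eqref{eq:duplication} and the explicit value of $h_k^\lambda$, and simplifying $\Gamma(\lambda)^2/h_k^\lambda$ yields precisely \eqref{eq:ExpFormula}.

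For the asymptotic statement, the key is the large-$k$ behavior of ${}_2F_1\!\left[\alpha+1-k,\,k+2\lambda+\alpha+1;\,\alpha+\lambda+\tfrac32;\,z\right]$ for fixed $z\in(0,1)$. This is a classical situation: as the top parameters go to $\pm\infty$ linearly in $k$ while the argument is fixed in $(0,1)$, the hypergeometric function exhibits oscillatory decay governed by $k^{-(\alpha+\lambda+1)/2}$ in modulus with a trigonometric factor whose phase involves $\arccos(\sqrt{z})$. Concretely I would use the known asymptotic expansion (see, e.g., the NIST handbook \cite{olver2010nist}, or apply the integral representation used in the proof of Theorem~\ref{thm:GegBound} together with a steepest-descent/stationary-phase analysis) to get
\[
{}_2F_1\!\left[\begin{matrix}\alpha+1-k,& k+2\lambda+\alpha+1;\\ \alpha+\lambda+\tfrac32;\end{matrix}\; \tfrac{1-\theta}{2}\right] = c_{\lambda,\alpha}\, \omega_{\frac{\lambda+\alpha+1}{2}}(\theta)^{1/2}\, k^{-\frac{\alpha+\lambda+1}{2}}\cos\!\big(2k\phi(\theta)+\psi\big)+O(k^{-\frac{\alpha+\lambda+3}{2}}),
\]
with $\phi(\theta)=\arccos(\sqrt{(1+\theta)/2})$ and an explicit phase shift $\psi$, and similarly for the argument $(1+\theta)/2$, for which the relevant angle is $\arccos(\sqrt{(1-\theta)/2})=\pi/2-\phi(\theta)$. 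Adding the two contributions (the second carrying the $(-1)^k=\cos(k\pi)$ factor) and using the addition formula for cosine collapses the two oscillatory pieces into the single cosine $\cos(2(k+\lambda)\phi(\theta)-\lambda\pi/2)$ displayed in \eqref{eq:AsymInt}; multiplying by the leading prefactor $\Gamma(\lambda)\Gamma(\alpha+1)(k+\lambda)/(2^{1+\alpha}\Gamma(\lambda+\alpha+\tfrac32)\sqrt\pi)$ and its own $O(k^{-2})$ correction, and simplifying the gamma quotient with \eqref{eq:reflection} and \eqref{eq:duplication} (this is where $\sin(\alpha\pi/2)$ and the power $2^{1+\lambda}$ emerge), gives the claimed leading term with error $O(k^{-\alpha-\lambda-1})$.

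The main obstacle is the uniformity and the exact constants in the hypergeometric asymptotics: I need the expansion of ${}_2F_1[a-k,\,b+k;\,c;\,z]$ not merely to leading order but with an explicit amplitude and phase, and with an error term that is genuinely $O(k^{-1})$ relative to the leading term, uniformly in the (fixed) argument away from $0$ and $1$. Getting the phase $2(k+\lambda)\phi(\theta)-\lambda\pi/2$ exactly right — in particular verifying that the two separate oscillations (with angles $\phi(\theta)$ and $\pi/2-\phi(\theta)$, and with the parity sign) recombine into precisely this single argument with precisely this $\lambda$-dependent shift — is the delicate bookkeeping step. Everything else (the integral splitting, the beta-integral evaluations, the gamma-function simplifications via \eqref{eq:duplication} and \eqref{eq:reflection}) is routine once the normalization conventions of the generalized Gegenbauer functions are pinned down.
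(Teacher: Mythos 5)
Your overall architecture (closed form first, then hypergeometric asymptotics, then trigonometric recombination) matches the paper's, and you correctly point to the generalized Gegenbauer functions of \cite[Definition~2.1]{liu2019optimal} and to the fact that the two oscillations at angles $\phi(\theta)$ and $\pi/2-\phi(\theta)$ must be recombined. But there are two genuine gaps. First, the mechanism you describe for \eqref{eq:ExpFormula} --- split $\int_{-1}^1\omega_\lambda C_k^\lambda|x-\theta|^\alpha$ at $x=\theta$ and apply a linear substitution to get ``a standard beta-type integral'' --- does not work: after the substitution the integrand still contains the degree-$k$ polynomial $C_k^\lambda(x)$ and the weight $(1-x^2)^{\lambda-1/2}$, which is not adapted to the subintervals $[-1,\theta]$ and $[\theta,1]$, so neither piece is a beta integral and neither has an elementary closed form. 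The paper's actual route is: apply the Rodrigues formula \eqref{eq:Rodrigues} and integrate by parts $m+1$ times (with $m=\lfloor\alpha\rfloor$) so that the integrand involves $f^{(m+1)}$, which has a pure fractional singularity $|x-\theta|^{s-1}$; then write $\omega_{\lambda+m+1}C_{k-m-1}^{\lambda+m+1}$ as a fractional integral ${}_{-1}\mathcal{I}_x^{1-s}$ of $\omega_{\lambda+\alpha}\,{}^{l}G_{k-\alpha}^{(\lambda+\alpha)}$ (\cite[Eqn.~(3.12b)]{liu2019optimal}); then use fractional integration by parts together with the crucial computation that ${}_{x}\mathcal{I}_{\theta}^{1-s}\{f^{(m+1)}\}$ is the \emph{constant} $(-1)^{m+1}\Gamma(\alpha+1)$ on $(-1,\theta)$; and finally integrate exactly via the Rodrigues-type identity for generalized Gegenbauer functions, leaving only the boundary term at $x=\theta$ (the endpoint contributions vanish because of the weight $\omega_{\lambda+\alpha+1}$). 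These steps are the content of the lemma and cannot be replaced by a change of variables.

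Second, your stated decay rate for the hypergeometric factors is wrong: you claim
${}_2F_1[\alpha+1-k,\,k+2\lambda+\alpha+1;\,\alpha+\lambda+\tfrac32;\,z]=O\bigl(k^{-(\alpha+\lambda+1)/2}\bigr)$,
but the correct order is $O\bigl(k^{-(\alpha+\lambda+1)}\bigr)$. Indeed this ${}_2F_1$ is, up to the normalization $\Gamma(n+1)\Gamma(2\mu)/\Gamma(n+2\mu)\sim n^{1-2\mu}$ with $n=k-\alpha-1$ and $\mu=\lambda+\alpha+1$, a Gegenbauer function $C_n^{\mu}$ evaluated in the interior, which oscillates with amplitude $O(n^{\mu-1})$; the product is $O(k^{-\mu})=O(k^{-\alpha-\lambda-1})$. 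Only with this exponent does the prefactor $(k+\lambda)$ in \eqref{eq:ExpFormula} produce $a_k^\lambda=O(k^{-\alpha-\lambda})$ as in \eqref{eq:AsymInt}; your exponent would give $a_k^\lambda=O(k^{(1-\alpha-\lambda)/2})$, which is inconsistent with the statement you are proving. The paper obtains the precise amplitude and phase in one step by citing the large-parameter expansion \cite[Eqn.~(4.7)]{paris2013asym}, which is the clean way to settle the ``delicate bookkeeping'' you flag at the end.
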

The proof of Lemma \ref{lem:IntSing} is postponed to Appendix A.

\begin{remark}
An immediate corollary of Lemma \ref{lem:IntSing} is the comparison
of decay rates of Chebyshev and Legendre coefficients, which was
studied in Boyd \& Petschek (2014) and Wang (2016). More
specifically, let $k\geq1$ and let $a_k^L$ and $a_k^C$,
respectively, denote the $k$th Legendre and Chebyshev coefficients
of $f$ defined by \eqref{def:Model}, i.e.,
\begin{align}\label{eq:ChebLeg}
a_k^L = \frac{2k+1}{2} \int_{\Omega} f(x) P_k(x) \mathrm{d}x, \quad
a_k^C = \frac{2}{\pi} \int_{\Omega} \frac{f(x) T_k(x)}{\sqrt{1-x^2}}
\mathrm{d}x.
\end{align}
It has been observed in the right panel of Figure 7 in Wang (2016)
that $a_k^C$ decays faster than $a_k^L$ by a factor of $O(k^{1/2})$
and the sequence $\{a_k^L/a_k^C k^{-1/2}\}$ oscillates around a
finite value as $k\rightarrow\infty$. However, a theoretical
explanation for this observation is still lacking. To clarify this
issue, using \eqref{eq:AsymInt} and \eqref{eq:GegChebT}, after some
simplifications, we obtain that
\begin{align}
\frac{a_k^{L}}{a_k^{C}} &= \omega_{\frac{3}{4}}(\theta)
\frac{\cos\left((2k+1)\phi(\theta) -
\frac{\pi}{4}\right)}{\cos\left(2k\phi(\theta) \right)}
\left(\frac{\pi k}{2}\right)^{1/2} + O(k^{-1/2}).
\end{align}
Consequently, we can see that the sequence $\{a_k^L/a_k^C
k^{-1/2}\}$ oscillates around a finite value as $k\rightarrow\infty$
whenever $\theta\neq0$ and tends to the constant $(\pi/2)^{1/2}$
whenever $\theta=0$.
\end{remark}

The following lemma will also be useful.
\begin{lemma}\label{lem:OscSeries}
Let $\nu\in\mathbb{R}$ 
and $\nu(\mathrm{mod}~2\pi)\neq0$. Then, for $\mu<0$, it holds that
\begin{align}\label{eq:OscSeries}
\chi_{\mu,\nu}(n) := \sum_{k=n}^{\infty} e^{ik\nu}k^{\mu} =O(n^\mu),
\quad n\rightarrow\infty.
\end{align}
\end{lemma}
\begin{proof}
For $\mu<-1$, the desired estimate follows immediately from (Olver,
1974, Equation~(5.10)). For $-1\leq\mu<0$, using the identity
(Olver, 1974, Equation~(5.09)), we have that
\begin{align}
\chi_{\mu,\nu}(n) = \frac{(-\mu) e^{i\nu}}{e^{i\nu}-1}
\chi_{\mu-1,\nu}(n) - \frac{e^{i n\nu}}{e^{i\nu}-1} n^{\mu} +
O(n^{\mu-1}). \nonumber
\end{align}
Since $\chi_{\mu-1,\nu}(n)=O(n^{\mu-1})$ in this case, the desired
estimate follows immediately.
\end{proof}

The main theorem in this subsection is now given as follows.
\begin{theorem}\label{thm:InterRate}
Let $f$ be defined by \eqref{def:Model} with $\theta\in(-1,1)$ and
let $\alpha>0$ be not an even integer. Then, for $\lambda<\alpha+1$
and $n\gg1$, it holds that
\begin{itemize}
\item[\rm (i)] The maximum error of $S_n^{\lambda}(f)$ satisfies
\begin{align}\label{eq:MaxErrInt}
\|f - S_n^{\lambda}(f) \|_{\infty} = \left\{
            \begin{array}{ll}
{\displaystyle O(n^{-\alpha}) },           & \hbox{$\lambda\leq1$,}   \\[8pt]
{\displaystyle O(n^{-\alpha-1+\lambda})},  & \hbox{$\lambda>1$.}
            \end{array}
            \right.
\end{align}

\item[\rm (ii)] For $x\in\Omega$, the pointwise error estimate of
$S_n^{\lambda}(f)$ is
\begin{align}\label{eq:PointwiseInt}
|f(x) - S_n^{\lambda}(f,x)| = \left\{
\begin{array}{ll}
{\displaystyle O(n^{-\alpha-1+\lambda})}, & \hbox{$x=\pm1$,}   \\[8pt]
{\displaystyle O(n^{-\alpha}) }, & \hbox{$x=\theta$,}   \\[8pt]
{\displaystyle O(n^{-\alpha-1}) }, &
\hbox{$x\in(-1,\theta)\cup(\theta,1)$.}
            \end{array}
            \right.
\end{align}

\end{itemize}
\end{theorem}
\begin{proof}
We only consider the proof of part (ii) since part (i) is a direct
consequence of part (ii). We start with the error estimate of
$S_n^{\lambda}(f)$ at $x=1$. From Lemma \ref{lem:IntSing} and the
fact that
$C_k^{\lambda}(1)=k^{2\lambda-1}/\Gamma(2\lambda)+O(k^{2\lambda-2})$,
we have
\begin{align}
f(1) - S_n^{\lambda}(f,1) &=
-\omega_{\frac{\lambda+\alpha+1}{2}}(\theta)
\sin\left(\frac{\alpha\pi}{2}\right)
\frac{2^{1+\lambda}\Gamma(\lambda)\Gamma(\alpha+1)}{\pi\Gamma(2\lambda)}
\nonumber \\
&~~~~~ \times \sum_{k=n+1}^{\infty} \left(
\frac{\cos\left(2(k+\lambda)\phi(\theta) - \frac{\lambda\pi}{2}
\right)}{k^{\alpha+1-\lambda}} + O(k^{-\alpha+\lambda-2}) \right).
\nonumber
\end{align}
Furthermore, we note that $\phi(\theta)\in(0,\pi/2)$ and
\begin{align}
\sum_{k=n+1}^{\infty} \frac{\cos\left(2(k+\lambda)\phi(\theta) -
\frac{\lambda\pi}{2} \right)}{k^{\alpha+1-\lambda}} &=
\cos\left(2\lambda\phi(\theta)-\frac{\lambda\pi}{2}\right)
\sum_{k=n+1}^{\infty} \frac{\cos\left(2k\phi(\theta)
\right)}{k^{\alpha+1-\lambda}} \nonumber \\
&~ - \sin\left(2\lambda\phi(\theta)-\frac{\lambda\pi}{2}\right)
\sum_{k=n+1}^{\infty} \frac{\sin\left(2k\phi(\theta)
\right)}{k^{\alpha+1-\lambda}}, \nonumber
\end{align}
and therefore, by Lemma \ref{lem:OscSeries}, these two sums on the
right-hand side behave like $O(n^{-\alpha-1+\lambda})$. This proves
the error estimate of $S_n^{\lambda}(f)$ at $x=1$. The error
estimate of $S_n^{\lambda}(f)$ at $x=-1$ can be proved in a similar
way and we omit the details.

Next, we consider the error estimate of $S_n^{\lambda}(f)$ at
$x\in(-1,1)$. For notational simplicity, we set $x=\cos\zeta$, where
$\zeta\in(0,\pi)$. According to Theorem~8.21.8 in Szeg\H{o} (1939),
\begin{align}\label{eq:GegenbauerAsym}
C_k^{\lambda}(x) = \frac{(1-x^2)^{-\lambda/2}}{\Gamma(\lambda)}
\left(\frac{2}{k}\right)^{1-\lambda} \cos\left((k+\lambda)\zeta -
\frac{\lambda\pi}{2} \right) + O(k^{\lambda-2}).
\end{align}
Combining \eqref{eq:GegenbauerAsym} with \eqref{eq:AsymInt} in Lemma
\ref{lem:IntSing}, after some simplification, we arrive at
\begin{align}
f(x) - S_n^{\lambda}(f,x) &= \sum_{k=n+1}^{\infty} a_k^{\lambda}
C_k^{\lambda}(x) \nonumber \\
&= -2\sin\left(\frac{\alpha\pi}{2}\right)
\omega_{\frac{\lambda+\alpha+1}{2}}(\theta) \Gamma(\alpha+1)
\frac{(1-x^2)^{-\lambda/2}}{\pi} \nonumber \\
&~~~~ \times \left[ \sum_{k=n+1}^{\infty}
\frac{\cos\left((k+\lambda)(2\phi(\theta)-\zeta) \right) +
\cos\left((k+\lambda)(2\phi(\theta)+\zeta)-\lambda\pi \right)
}{k^{\alpha+1}} \right]    \nonumber \\
&~~~~ + O(n^{-\alpha-1}).  \nonumber
\end{align}
We denote with $J$ the term inside the bracket on the right-hand
side of the above equation and it is easily seen that the error
estimate of $S_n^{\lambda}(f)$ is completely determined by the
asymptotic behavior of $J$. We now consider the error estimate of
$S_n^{\lambda}(f)$ at the singularity $x=\theta$. In this case, it
is easily checked that $\zeta=\arccos\theta=2\phi(\theta)$, and thus
\begin{align}
J &= \sum_{k=n+1}^{\infty} \frac{1 +
\cos\left((k+\lambda)(2\zeta)-\lambda\pi \right)}{k^{\alpha+1}}
\nonumber \\
&= \sum_{k=n+1}^{\infty} \frac{1}{k^{\alpha+1}} +
\cos((\pi-2\zeta)\lambda) \sum_{k=n+1}^{\infty}
\frac{\cos(2k\zeta)}{k^{\alpha+1}} + \sin((\pi-2\zeta)\lambda)
\sum_{k=n+1}^{\infty} \frac{\sin(2k\zeta)}{k^{\alpha+1}}. \nonumber
\end{align}
Clearly, the first sum behaves like $O(n^{-\alpha})$ and the last
two sums, in view of Lemma \ref{lem:OscSeries}, behave like
$O(n^{-\alpha-1})$. Hence, we conclude that $J=O(n^{-\alpha})$ and
this proves the error estimate of $S_n^{\lambda}(f)$ at $x=\theta$.
Finally, we consider the error estimate of $S_n^{\lambda}(f)$ at
$x\in(-1,1)\backslash\{\theta\}$. In this case, we note that
\begin{align}
J &= \cos(\lambda(2\phi(\theta)-\zeta)) \sum_{k=n+1}^{\infty}
\frac{\cos(k(2\phi(\theta)+\zeta))}{k^{\alpha+1}} \nonumber \\
& - \sin(\lambda(2\phi(\theta)-\zeta)) \sum_{k=n+1}^{\infty}
\frac{\sin(k(2\phi(\theta)+\zeta))}{k^{\alpha+1}} \nonumber \\
& + \cos(\lambda(2\phi(\theta)+\zeta-\pi)) \sum_{k=n+1}^{\infty}
\frac{\cos(k(2\phi(\theta)+\zeta))}{k^{\alpha+1}} \nonumber \\
& - \sin(\lambda(2\phi(\theta)+\zeta-\pi)) \sum_{k=n+1}^{\infty}
\frac{\sin(k(2\phi(\theta)+\zeta))}{k^{\alpha+1}}, \nonumber
\end{align}
and by using Lemma \ref{lem:OscSeries} again and the fact that
$2\phi(\theta)+\zeta\in(0,2\pi)$, these four sums on the right-hand
side all behave like $O(n^{-\alpha-1})$. Therefore, we conclude that
$J=O(n^{-\alpha-1})$ and this proves the error estimate of
$S_n^{\lambda}(f)$ at $x\in(-1,1)\backslash\{\theta\}$. This
completes the proof.
\end{proof}

Several remarks on Theorem \ref{thm:InterRate} are in order.
\begin{remark}
Recall from Timan (1963) that the rate of convergence of
$\mathcal{B}_n(f)$ in the maximum norm is $O(n^{-\alpha})$.
Therefore, the rate of convergence of $S_n^{\lambda}(f)$ is the same
as that of $\mathcal{B}_n(f)$ whenever $-1/2<\lambda\leq1$. For
$\lambda>1$, however, the rate of convergence of $S_n^{\lambda}(f)$
is slower than that of $\mathcal{B}_n(f)$ by a factor of
$n^{\lambda-1}$, which is one power of $n$ better than the result
predicted by \eqref{eq:errorG}.
\end{remark}

\begin{remark}
Pointwise error estimates of Jacobi projections were studied in
Agahanov \& Natanson (1966) in the space
\[
W_{\mu}^{\nu}(\Omega) = \left\{f~\big|~
f,f{'},\cdots,f^{(\nu-1)}\in\mathrm{AC}(\Omega),~
f^{(\nu)}\in\mathrm{H}^{\mu}(\Omega) \right\},
\]
where $\nu\in\mathbb{N}$, $\mu\in[0,1]$ and $\mathrm{AC}(\Omega)$
denotes the space of absolutely continuous functions and
$\mathrm{H}^{\mu}(\Omega)$ denotes the space of H\"{o}lder
continuous function with exponent $\mu$. When restricting their
results to the case of Gegenbauer projections and the model function
\eqref{def:Model}, their results can be written as
\[
|f(x) - S_n^{\lambda}(f,x)| = \left\{
\begin{array}{ll}
{\displaystyle O(n^{\lambda-\alpha})}, & \hbox{$x=\pm1$,}   \\[8pt]
{\displaystyle O(n^{-\alpha}\ln n) },  & \hbox{$x\in(-1,1)$.}
            \end{array}
            \right.
\]
Compared with Theorem \ref{thm:InterRate}, it is clear to see that
our results are sharper.
\end{remark}

\begin{figure}[ht]
\centering
\includegraphics[width=7.5cm,height=6cm]{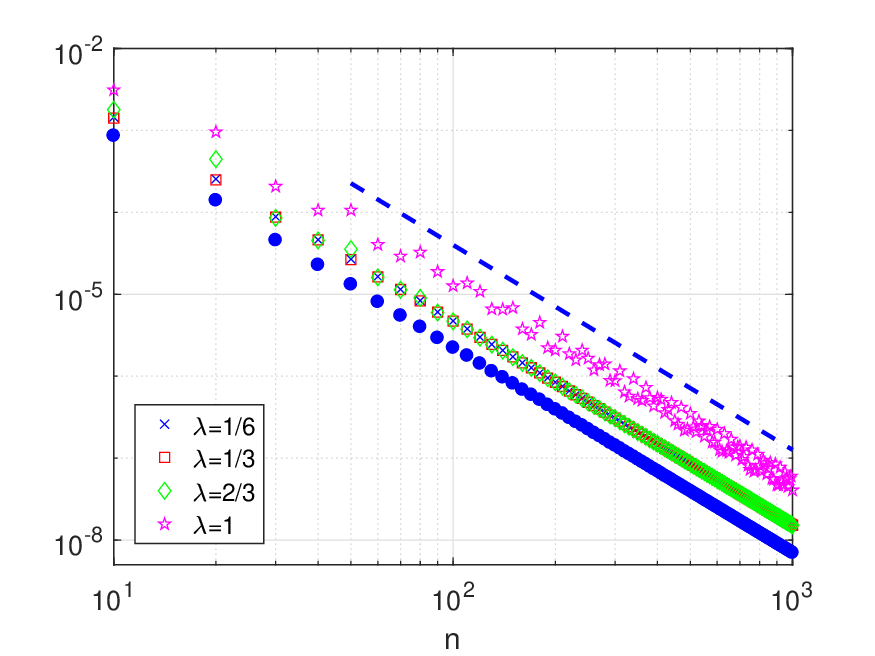}~
\includegraphics[width=7.5cm,height=6cm]{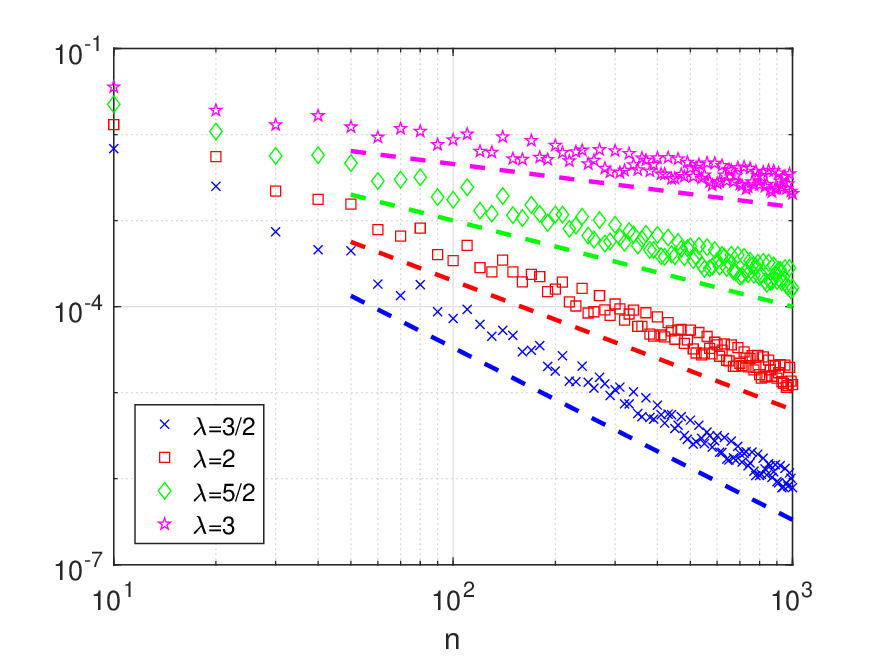}
\caption{The left panel shows the maximum errors of
$\mathcal{B}_n(f)$ (dots) and $S_n^{\lambda}(f)$ for
$\lambda=1/6,1/3,2/3,1$. The right panel shows the maximum errors of
$S_n^{\lambda}(f)$ for $\lambda=3/2,2,5/2,3$. The dashed line in the
left panel is $O(n^{-5/2})$ and these dashed lines in the right
panel indicate the convergence rates predicted by
\eqref{eq:MaxErrInt}. Here $f(x)=|x+0.4|^{5/2}$.}
\label{fig:ExamVIII}
\end{figure}

In Figure \ref{fig:ExamVIII} we illustrate the maximum error of
$S_n^{\lambda}(f)$ for the test function $f(x)=|x+0.4|^{5/2}$. As
expected, the predicted convergence rates by \eqref{eq:MaxErrInt}
agree quite well with the observed convergence rates.

\subsection{The case $\theta=\pm1$}
Error estimates of Gegenbauer projections for functions with
endpoint singularities have been studied in the recent work Xiang \&
Liu (2020) and optimal convergence rates of $S_n^{\lambda}(f)$ in
the maximum norm have been derived based on optimal decay rates of
the Gegenbauer coefficients. Here we revisit
this issue and provide a more thorough insight. 
\begin{theorem}\label{thm:EndRate}
Let $f$ be defined by \eqref{def:Model} with $\theta=\pm1$ and
$\alpha>0$ is not an integer.
\begin{itemize}
\item[\rm (i)] For $\lambda>0$ and $n\geq \lfloor \alpha \rfloor$, the
maximum error of $S_n^{\lambda}(f)$ is attained at $x=\theta$ and
\begin{align}
\|f - S_n^{\lambda}(f) \|_{\infty} = \frac{2^{\alpha}
|\sin(\alpha\pi)| \Gamma(\alpha+\lambda +\frac{1}{2}) \Gamma(\alpha)
}{\pi\Gamma(\lambda+\frac{1}{2}) n^{2\alpha}} + O(n^{-2\alpha-1}).
\end{align}

\item[\rm (ii)] For $\lambda>0$ and large $n$, the pointwise error estimate
is
\begin{align}\label{eq:PointwiseEnd}
|f(x) - S_n^{\lambda}(f,x) | = \left\{
\begin{array}{ll}
{\displaystyle O(n^{-2\alpha}) },           & \hbox{$x=\theta$,}     \\[8pt]
{\displaystyle O(n^{-2\alpha-1}) },         & \hbox{$x=-\theta$,}    \\[8pt]
{\displaystyle O(n^{-2\alpha-\lambda-1}) }, & \hbox{$|x|<|\theta|$.}
            \end{array}
            \right.
\end{align}

\end{itemize}
\end{theorem}
\begin{proof}
We first prove part (i). Using (Gradshteyn \& Ryzhik, 2007,
Equation~(7.311.3)), \eqref{eq:reflection} and
\eqref{eq:duplication}, we can write the Gegenbauer coefficients of
$f$ as
\begin{align}\label{eq:akEnd}
a_k^{\lambda} &= -\theta^{k} \frac{2^{2\lambda+\alpha}
\sin(\alpha\pi) \Gamma(\lambda) \Gamma(\alpha+\lambda +\frac{1}{2})
\Gamma(\alpha+1) (k+\lambda) \Gamma(k-\alpha) }{\pi^{3/2}
\Gamma(k+\alpha+2\lambda+1)}.
\end{align}
An important observation is that, for $k\geq\lfloor \alpha \rfloor +
1$, the sequence $\{a_k^{\lambda}\}$ is a sequence with alternating
sign whenever $\theta=-1$ and is a sequence with constant sign
whenever $\theta=1$. Consequently, for $n\geq \lfloor \alpha
\rfloor$, we can deduce from the symmetry property of
$C_k^{\lambda}(x)$ that
\begin{align}
\|f - S_n^{\lambda}(f) \|_{\infty} &\leq \sum_{k=n+1}^{\infty} |
a_k^{\lambda} | C_k^{\lambda}(|\theta|)  
= |f(\theta) - S_n^{\lambda}(f,\theta)|, \nonumber
\end{align}
which implies that the maximum error of $S_n^{\lambda}(f)$ is
attained at $x=\theta$. Combining this with \eqref{eq:akEnd} and
\eqref{eq:AsyGAMMA} we have
\begin{align}
\|f - S_n^{\lambda}(f)\|_{\infty} &= \frac{2^{\alpha+1}
|\sin(\alpha\pi)| \Gamma(\alpha+\lambda +\frac{1}{2})
\Gamma(\alpha+1)}{\pi\Gamma(\lambda+\frac{1}{2})}
\sum_{k=n+1}^{\infty} \frac{(k+\lambda) \Gamma(k-\alpha)
\Gamma(k+2\lambda) }{\Gamma(k+\alpha+2\lambda+1)
\Gamma(k+1)} \nonumber \\
&= \frac{2^{\alpha+1} |\sin(\alpha\pi)| \Gamma(\alpha+\lambda
+\frac{1}{2}) \Gamma(\alpha+1) }{\pi\Gamma(\lambda+\frac{1}{2})}
\sum_{k=n+1}^{\infty} \left( \frac{1}{k^{2\alpha+1}} + O(k^{-2\alpha-2}) \right) \nonumber \\
&= \frac{2^{\alpha} |\sin(\alpha\pi)| \Gamma(\alpha+\lambda
+\frac{1}{2}) \Gamma(\alpha) }{\pi\Gamma(\lambda+\frac{1}{2})
n^{2\alpha}} + O(n^{-2\alpha-1}).  \nonumber
\end{align}
This proves part (i).

As for part (ii), the pointwise error estimate at $x=\theta$ follows
from part (i) directly and at $x=-\theta$ follows from
\eqref{eq:akEnd} and the symmetry property of Gegenbauer
polynomials. For the case $|x|<|\theta|$, the pointwise error
estimate follows from \eqref{eq:GegenbauerAsym} and
\eqref{eq:akEnd}. This ends the proof.

\end{proof}

Some remarks are in order.
\begin{remark}
From Timan (1963) we know that the rate of convergence of
$\mathcal{B}_n(f)$ is $O(n^{-2\alpha})$. In the case $\lambda<0$,
from \eqref{eq:errorG} and \eqref{eq:Lebesgue} we know that
$S_n^{\lambda}(f)$ converges at the same rate as $\mathcal{B}_n(f)$,
we can thus infer that the rate of convergence of $S_n^{\lambda}(f)$
is $O(n^{-2\alpha})$. In the case $\lambda=0$, from Liu \emph{et
al.} (2019) we know that the rate of convergence of Chebyshev
projection of degree $n$ is also $O(n^{-2\alpha})$. Therefore,
combining these with Theorem \ref{thm:EndRate} we conclude that
$S_n^{\lambda}(f)$ and $\mathcal{B}_n(f)$ converge at the same rate
for all $\lambda>-1/2$.
\end{remark}

\begin{remark}
Observe that the constant in the leading term of $\|f -
S_n^{\lambda}(f) \|_{\infty}$ behaves like $O(\lambda^{\alpha})$ as
$\lambda\rightarrow\infty$, we can deduce that the maximum error of
$S_n^{\lambda}(f)$ will deteriorate as $\lambda$ increases.
\end{remark}

\begin{figure}[ht]
\centering
\includegraphics[width=7.5cm,height=6cm]{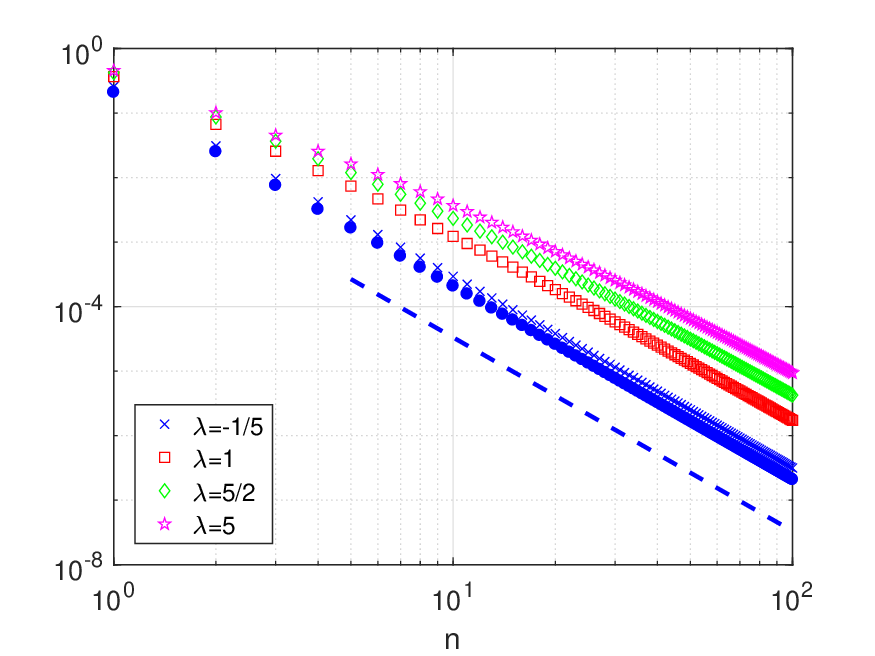}~
\includegraphics[width=7.5cm,height=6cm]{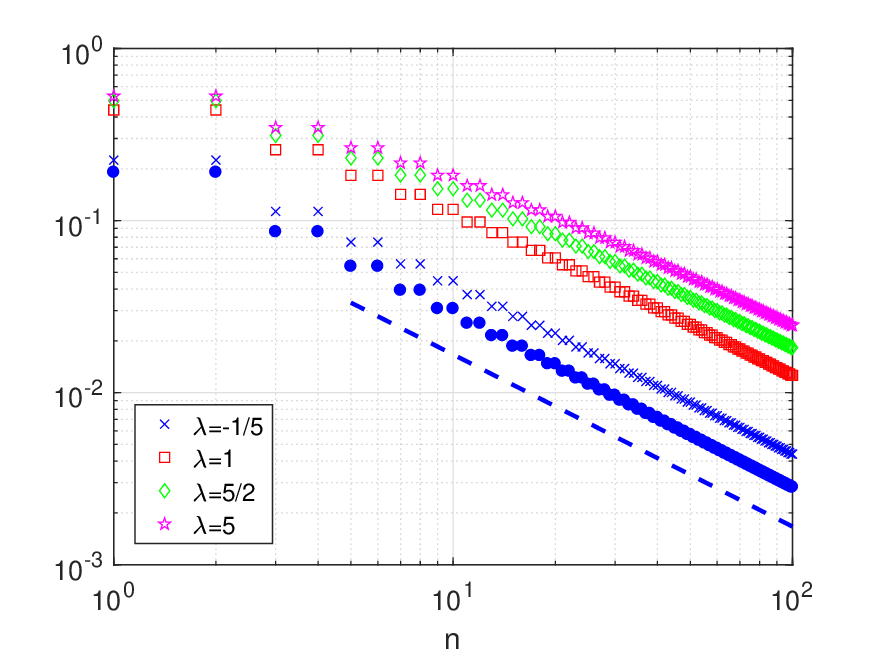}
\caption{The maximum errors of $\mathcal{B}_n(f)$ (dots) and
$S_n^{\lambda}(f)$ with four values of $\lambda$ for
$f(x)=(1+x)^{3/2}$ (left) and $f(x)=\arccos(x)$ (right). The dashed
line in the left panel is $O(n^{-3})$ and in the right panel is
$O(n^{-1})$.} \label{fig:ExamIX}
\end{figure}

In Figure \ref{fig:ExamIX} we illustrate the maximum errors of
$\mathcal{B}_n(f)$ and $S_n^{\lambda}(f)$ for $f(x)=(1+x)^{3/2}$ and
$f(x)=\arccos(x)$. It is easily seen that $\alpha=3/2$ for the
former and $\alpha=1/2$ for the latter. As expected, we observe that
the rate of convergence of $\mathcal{B}_n(f)$ is better than that of
$S_n^{\lambda}(f)$ by only constant factors. Moreover, we also see
that the maximum error of $S_n^{\lambda}(f)$ indeed deteriorates
slightly as $\lambda$ increases.

\subsection{An explanation of the error localization property}
For functions with an interior singularity, it has been observed in
Wang (2021) that the pointwise error of Legendre projections has the
error localization property, i.e., the error at the interior
singularity is obviously larger than the error away from the
singularity. However, a rigorous analysis of this observation is
still lacking. Here we restrict ourselves to the model function
\eqref{def:Model} and provide a theoretical explanation:
\begin{itemize}
\item In the case where $\theta\in(-1,1)$, we know from
\eqref{eq:PointwiseInt} that the convergence rate of
$S_n^{\lambda}(f)$ at each point $x\in(-1,\theta)\cup(\theta,1)$ is
faster than the convergence rate at $x=\theta$ as
$n\rightarrow\infty$. Moreover, the convergence rate of
$S_n^{\lambda}(f)$ at $x=\pm1$ is faster than the convergence rate
at all $x\in(-1,1)$ whenever $\lambda<0$ and is slower than the
convergence rate at $x\in(-1,\theta)\cup(\theta,1)$ whenever
$\lambda>0$.

\item In the case where $\theta=\pm1$, we know from
\eqref{eq:PointwiseEnd} that the convergence rate of
$S_n^{\lambda}(f)$ at each point $x\in(-1,1)$ is faster than the
convergence rate at $x=\theta$, especially when $\lambda$ is large.
Moeover, the convergence rate of $S_n^{\lambda}(f)$ at $x=-\theta$
is always faster than the convergence rate at $x=\theta$.
\end{itemize}
It is clear from these results that the error of $S_n^{\lambda}(f)$
at the singularity $x=\theta$ is obviously larger than the error at
$x\in(-1,\theta)\cup(\theta,1)$ for large $n$ and the maximum error
of $S_n^{\lambda}(f)$ is always attained at one of the critical
points, i.e., $x=\theta,\pm1$. This gives a clear explanation for
the error localization phenomenon of Gegenbauer projections.

\begin{figure}[ht]
\centering
\includegraphics[width=7.5cm,height=6cm]{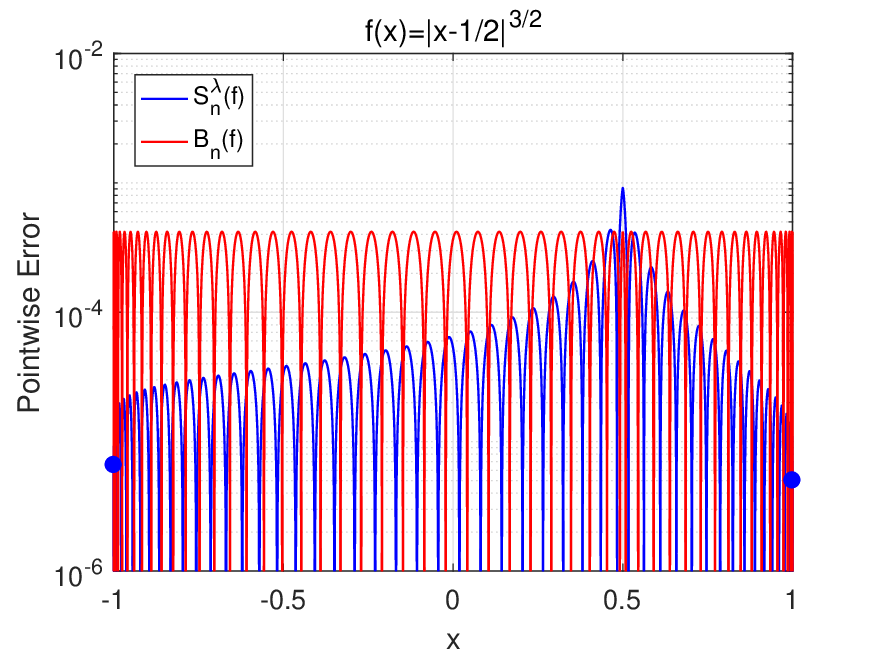}~
\includegraphics[width=7.5cm,height=6cm]{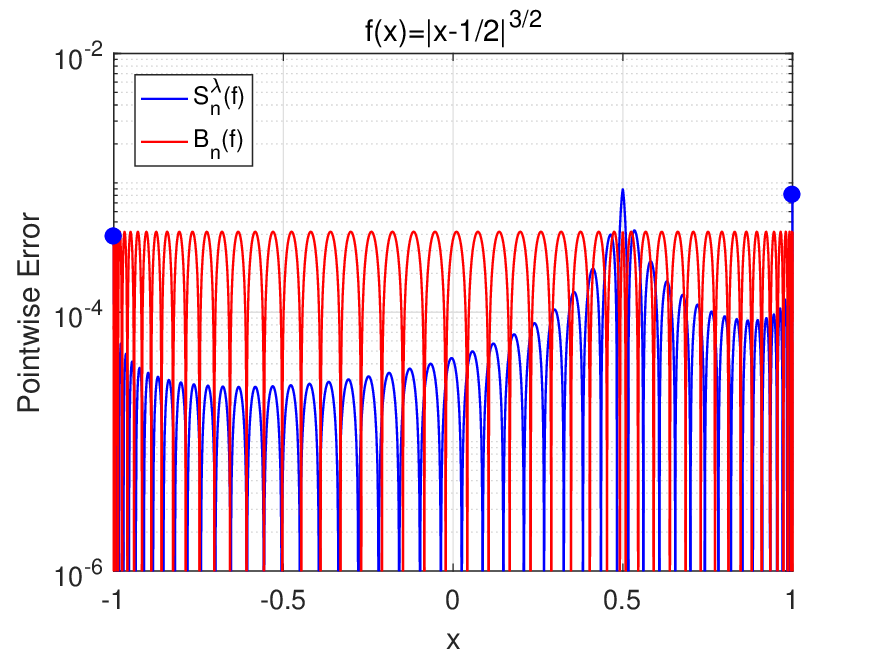}\\
\includegraphics[width=7.5cm,height=6cm]{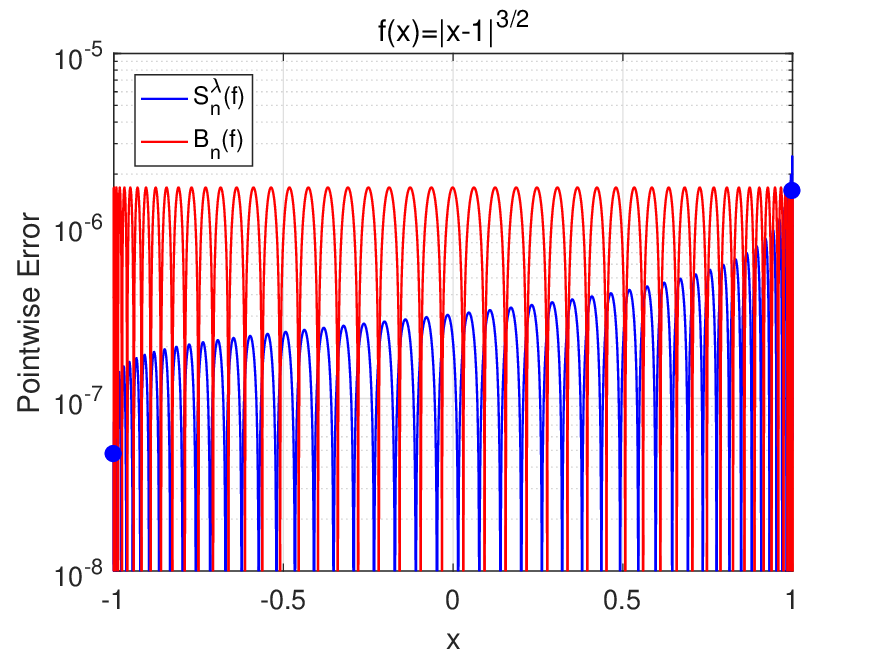}~
\includegraphics[width=7.5cm,height=6cm]{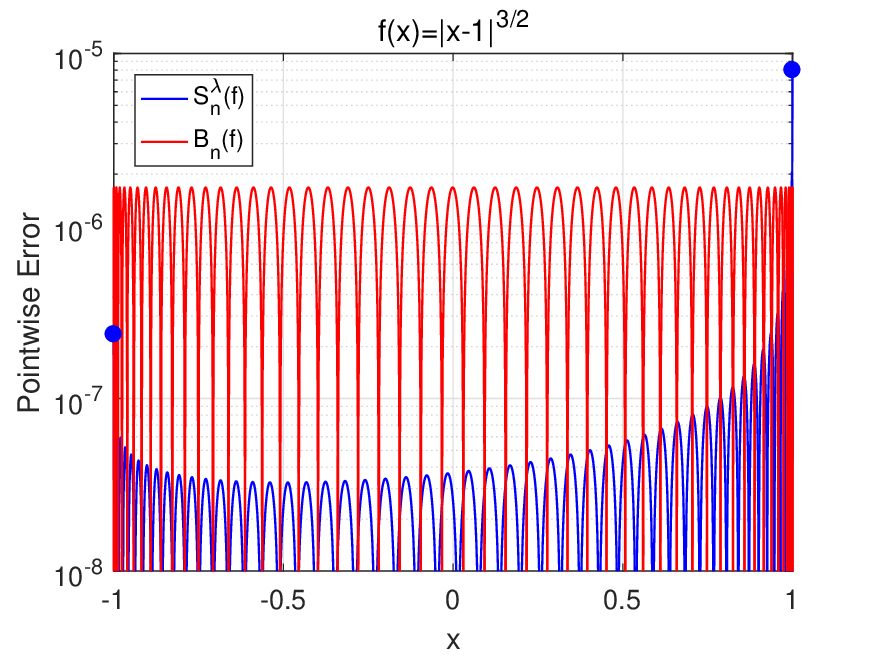}
\caption{Top row shows the pointwise errors of $\mathcal{B}_n(f)$
and $S_n^{\lambda}(f)$ for $\lambda=-1/4$ (left) and $\lambda=3/4$
(right). Bottom row shows the pointwise errors of $\mathcal{B}_n(f)$
and $S_n^{\lambda}(f)$ for $\lambda=-1/4$ (left) and $\lambda=1/2$
(right). Here we choose $n=50$ and these points indicate the errors
at $x=\pm1$.} \label{fig:ExamVII}
\end{figure}

\begin{remark}
Let $p_n^{L_1}(x)$ be the best polynomial approximation of degree
$n$ to $f$ in the $L_1$ norm. Very recently, it was shown in
Nakatsukasa \& Townsend (2021) that $p_n^{L_1}(x)$ also has the
error localization property, that is, the error of $p_n^{L_1}(x)$ is
obviously smaller than the error of $\mathcal{B}_n(f)$ except for a
set of small measure. We refer the reader to Nakatsukasa \& Townsend
(2021) for the discussion of the examples $f(x)=\sqrt{1-x^2}$ and
$f(x)=|x|$.
\end{remark}

On the other hand, we know from the equioscillation theorem that the
maximum error of $\mathcal{B}_n(f)$ is attained at least at $n+2$
points on $[-1,1]$ and the convergence rate of $\mathcal{B}_n(f)$ is
$O(n^{-\alpha})$ whenever $\theta\in(-1,1)$ and is $O(n^{-2\alpha})$
whenever $\theta=\pm1$. Hence, we can deduce that $S_n^{\lambda}(f)$
is actually more accurate than $\mathcal{B}_n(f)$ except in the
neighborhood of critical points. In Figure \ref{fig:ExamVII} we show
the pointwise errors of $S_n^{\lambda}(f)$ and $\mathcal{B}_n(f)$
for $\theta=1/2$ (top) and $\theta=1$ (bottom). Clearly, we observe
that numerical results are consistent with our analysis.

\section{Concluding remarks}\label{sec:conclusion}
In this work, we have compared the convergence behavior of
Gegenbauer projections $S_n^{\lambda}(f)$ and best approximations
$\mathcal{B}_n(f)$ and analyzed optimal rates of convergence of
Gegenbauer projections $S_n^{\lambda}(f)$ in the maximum norm. In
the case of analytic functions, we established some explicit error
bounds for $S_n^{\lambda}(f)$ in the maximum norm and proved that
these bounds are optimal in the sense that they can not be further
improved with respect to $n$. In the case of piecewise analytic
functions of class $C^{m-1}(\Omega)$ with $m\in\mathbb{N}$, we also
established optimal rates of convergence of $S_n^{\lambda}(f)$ in
the maximum norm. With these results, we showed that
$S_n^{\lambda}(f)$ and $\mathcal{B}_n(f)$ converge at the same rate
in the context of either $f$ is analytic and $\lambda\leq0$ or
$f\in{C}^{m-1}(\Omega)$ with $m\in\mathbb{N}$ is piecewise analytic
on $\Omega$ and $\lambda\leq1$. Otherwise, the rate of convergence
of $S_n^{\lambda}(f)$ is slower than that of $\mathcal{B}_n(f)$ by a
factor of $n^{\lambda}$ whenever $f$ is analytic and $\lambda>0$ and
by a factor of $n^{\lambda-1}$ whenever $f\in{C}^{m-1}(\Omega)$ is
piecewise analytic on $\Omega$ and $\lambda>1$. We also studied
optimal rates of convergence of Gegenbauer projections for functions
with algebraic singularities and we focused on the model function
$f(x)=|x-\theta|^{\alpha}$, where $\theta\in\Omega$ and $\alpha>0$
is not an even integer whenever $\theta\in(-1,1)$ and is not an
integer whenever $\theta=\pm1$. In the case $\theta\in(-1,1)$, we
showed that the maximum error of $S_n^{\lambda}(f)$ is attained at
one of the critical points, i.e., $x=\theta$ and $\pm1$, and the
rate of convergence of $S_n^{\lambda}(f)$ is the same as that of
$\mathcal{B}_n(f)$ for $\lambda\leq1$ and is slower than that of
$\mathcal{B}_n(f)$ by a factor of $n^{\lambda-1}$ for $\lambda>1$.
In the case $\theta=\pm1$, we show that the maximum error of
$S_n^{\lambda}(f)$ is attained at $x=\theta$ and both
$S_n^{\lambda}(f)$ and $\mathcal{B}_n(f)$ always converge at the
same rate for all $\lambda>-1/2$. We also provided an explanation
for the error localization property of Gegenbauer projections and
showed that Gegenbauer projections are actually more accurate than
best approximations except in the neighborhood of critical points.
All these findings were illustrated by numerical experiments.

We close this paper by clarifying the effect of the difference of
the size of Gegenbauer polynomials at the endpoints and in the
interior of $\Omega$ on the maximum error of Gegenbauer projections.
In the case where the singularity of the underlying function is
located at the interior of $\Omega$, by Theorem \ref{thm:InterRate}
we know that the difference of the size of Gegenbauer polynomials at
the endpoints and in the interior of $\Omega$ leads to the jump
phenomenon of the location of the maximum error of Gegenbauer
projections, as shown in Figure \ref{fig:ExamVI}. In this case, the
difference of the size of Gegenbauer polynomials at the endpoints
and in the interior of $\Omega$ accounts for the maximum error of
Gegenbauer polynomials. In the case where the singularity is located
at one of the endpoints, by Theorem \ref{thm:EndRate} we know that
the maximum error of Gegenbauer projections is always determined by
the error at the singularity and thus the difference of the size of
Gegenbauer polynomials at the endpoints and in the interior of
$\Omega$ has no effect on the maximum error of Gegenbauer
projections.

\section*{Acknowledgements}
This work was supported by National Natural Science Foundation of
China under grant number 11671160. The author thanks the two
anonymous reviewers for their helpful comments on the manuscript.

\appendix

\section{Proof of Lemma \ref{lem:IntSing}}
\begin{proof}
To show \eqref{eq:ExpFormula}, we follow the idea of Theorem 4.3 in
\cite{liu2019optimal} for Chebyshev coefficients. Let $m=\lfloor
\alpha \rfloor$ and $s=\alpha-m\in[0,1)$. Invoking the Rodrigues
formula \eqref{eq:Rodrigues} and using integration by parts $m+1$
times, we have for $k\geq m+1$ that
\begin{align}\label{eq:IntS1}
a_k^{\lambda} &= \frac{1}{h_k^{\lambda}} \prod_{j=0}^{m}
\frac{2(\lambda+j)}{(k-j)(k+2\lambda+j)} \int_{-1}^{1} f^{(m+1)}(x)
\omega_{\lambda+m+1}(x) C_{k-m-1}^{\lambda+m+1}(x) \mathrm{d}x \nonumber \\
&= \frac{1}{h_k^{\lambda}} \prod_{j=0}^{m}
\frac{2(\lambda+j)}{(k-j)(k+2\lambda+j)} \left[ \int_{-1}^{\theta}
f^{(m+1)}(x) \omega_{\lambda+m+1}(x)
C_{k-m-1}^{\lambda+m+1}(x) \mathrm{d}x \right. \nonumber \\
&~~~~~~~~~~ \left. + \int_{\theta}^{1} f^{(m+1)}(x)
\omega_{\lambda+m+1}(x) C_{k-m-1}^{\lambda+m+1}(x) \mathrm{d}x
\right].
\end{align}
We first consider the case $s=0$ (i.e., $\alpha=m$ is an odd
integer). In this case, direct calculations show that the $(m+1)$th
derivative of $f$ in the distributional sense is given by
$f^{(m+1)}(x)=2m!\delta(x-\theta)$, where $\delta(x)$ is the Dirac
delta function. Substitution of this into the first equality of
\eqref{eq:IntS1} gives
\begin{align}\label{eq:IntS11}
a_k^{\lambda} &= \frac{2 m!}{h_k^{\lambda}} \left[ \prod_{j=0}^{m}
\frac{2(\lambda+j)}{(k-j)(k+2\lambda+j)} \right]
\omega_{\lambda+m+1}(\theta) C_{k-m-1}^{\lambda+m+1}(\theta).
\end{align}
Combining \eqref{eq:IntS11}, \eqref{def:GegenPoly} and the symmetry
of Gegenbauer polynomials (i.e., $C_k^{\lambda}(-x)=(-1)^k
C_k^{\lambda}(x)$) gives the desired result \eqref{eq:ExpFormula}.
This proves the case $s=0$.

In the following, we consider the case $s\in(0,1)$. We consider to
derive explicit forms of these two integrals inside the square
bracket of \eqref{eq:IntS1}. For simplicity of notation, we denote
the former one by $J_1$ and the latter one by $J_2$. From
\cite[Equation~(3.12b)]{liu2019optimal}, we know that
\begin{align}\label{eq:IntS2}
\omega_{\lambda+m+1}(x) C_{k-m-1}^{\lambda+m+1}(x) &=
\frac{\Gamma(k+m+2\lambda+1)\Gamma(\lambda+m+\frac{3}{2})}{\Gamma(k-m)\Gamma(2m+2\lambda+2)
2^{s-1} \Gamma(\lambda+\alpha+\frac{1}{2})} \nonumber \\
&~~~~~ \times {}_{-1}\mathcal{I}_{x}^{1-s} \left\{
\omega_{\lambda+\alpha}(x) {}^{l}G_{k-\alpha}^{(\lambda+\alpha)}(x)
\right\},  
\end{align}
where ${}_{a}\mathcal{I}_{x}^{\nu}(\cdot)$ is the left fractional
integral of order $\nu$ and ${}^{l}G_{\nu}^{(\lambda)}(x)$ is the
left generalized Gegenbauer function of fractional degree $\nu$
defined by
\begin{align}
{}_{a}\mathcal{I}_{x}^{\nu}(f) = \frac{1}{\Gamma(\nu)} \int_{a}^{x}
\frac{f(t)}{(x-t)^{1-\nu}} \mathrm{d}t, \quad
{}^{l}G_{\nu}^{(\lambda)}(x) = (-1)^{\lfloor \nu \rfloor}
{}_2\mathrm{ F}_1\left[\begin{matrix} -\nu, & \nu+2\lambda
\\   \lambda + \frac{1}{2}  \hspace{-1cm} &\end{matrix} ;~ \frac{1+x}{2}
\right].  \nonumber
\end{align}
For $J_1$, using \eqref{eq:IntS2} and fractional integration by
part, we obtain
\begin{align}\label{eq:IntS3}
J_1 &=
\frac{\Gamma(k+m+2\lambda+1)\Gamma(\lambda+m+\frac{3}{2})}{\Gamma(k-m)\Gamma(2m+2\lambda+2)
2^{s-1} \Gamma(\lambda+\alpha+\frac{1}{2})}  \nonumber \\
&~~~~~~~~~~ \times \int_{-1}^{\theta} f^{(m+1)}(x)
{}_{-1}\mathcal{I}_{x}^{1-s} \left\{ \omega_{\lambda+\alpha}(x)
{}^{l}G_{k-\alpha}^{(\lambda+\alpha)}(x) \right\} \mathrm{d}x
\nonumber \\
&=
\frac{\Gamma(k+m+2\lambda+1)\Gamma(\lambda+m+\frac{3}{2})}{\Gamma(k-m)\Gamma(2m+2\lambda+2)
2^{s-1} \Gamma(\lambda+\alpha+\frac{1}{2})}  \nonumber \\
&~~~~~~~~~~ \times \int_{-1}^{\theta} \omega_{\lambda+\alpha}(x)
{}^{l}G_{k-\alpha}^{(\lambda+\alpha)}(x)
{}_{x}\mathcal{I}_{\theta}^{1-s} \left\{ f^{(m+1)}(x) \right\}
\mathrm{d}x,
\end{align}
where ${}_{x}\mathcal{I}_{\theta}^{\nu}(\cdot)$ is the right
fractional Riemann-Liouville integral of order $\nu$. For
$x\in(-1,\theta)$, a direction calculation shows that
${}_{x}\mathcal{I}_{\theta}^{1-s}\{ f^{(m+1)}\} = (-1)^{m+1}
\Gamma(\alpha+1)$. Moreover, using
\cite[Equation~(3.13b)]{liu2019optimal}, we have
\begin{align}
\omega_{\lambda+\alpha}(x) {}^{l}G_{k-\alpha}^{(\lambda+\alpha)}(x)
=
-\frac{\Gamma(\lambda+\alpha+\frac{1}{2})}{2\Gamma(\lambda+\alpha+\frac{3}{2})}
\frac{\mathrm{d}}{\mathrm{d}x} \left\{ \omega_{\lambda+\alpha+1}(x)
{}^{l}G_{k-\alpha-1}^{(\lambda+\alpha+1)}(x) \right\}, \nonumber
\end{align}
and therefore, we arrive at
\begin{align}\label{eq:IntS4}
J_1 &= (-1)^m
\frac{\Gamma(k+m+2\lambda+1)\Gamma(\lambda+m+\frac{3}{2})
\Gamma(\alpha+1)}{\Gamma(k-m)\Gamma(2m+2\lambda+2) 2^{s}
\Gamma(\lambda+\alpha+\frac{3}{2})}
\omega_{\lambda+\alpha+1}(\theta)
{}^{l}G_{k-\alpha-1}^{(\lambda+\alpha+1)}(\theta) \nonumber\\
&= (-1)^k \frac{\Gamma(k+m+2\lambda+1)\Gamma(\lambda+m+\frac{3}{2})
\Gamma(\alpha+1)}{\Gamma(k-m)\Gamma(2m+2\lambda+2) 2^{s}
\Gamma(\lambda+\alpha+\frac{3}{2})}
\omega_{\lambda+\alpha+1}(\theta) \nonumber \\
&~~~~~~ \times {}_2\mathrm{ F}_1\left[\begin{matrix} \alpha+1-k, &
k+2\lambda+\alpha+1
\\  \alpha + \lambda + \frac{3}{2} \hspace{-1cm} &\end{matrix} ;~ \frac{1+\theta}{2}
\right].
\end{align}
Using similar arguments, we can obtain
\begin{align}\label{eq:IntS5}
J_2 &= \frac{\Gamma(k+m+2\lambda+1)\Gamma(\lambda+m+\frac{3}{2})
\Gamma(\alpha+1)}{\Gamma(k-m)\Gamma(2m+2\lambda+2) 2^{s}
\Gamma(\lambda+\alpha+\frac{3}{2})}
\omega_{\lambda+\alpha+1}(\theta) \nonumber \\
&~~~~~~ \times {}_2\mathrm{ F}_1\left[\begin{matrix} \alpha+1-k, &
k+2\lambda+\alpha+1
\\ \alpha + \lambda + \frac{3}{2} \hspace{-1cm} &\end{matrix} ;~ \frac{1-\theta}{2}
\right].
\end{align}
Inserting \eqref{eq:IntS4} and \eqref{eq:IntS5} into
\eqref{eq:IntS1}, we obtain \eqref{eq:ExpFormula}.

As for \eqref{eq:AsymInt}, it follows from applying the asymptotic
expansion of Gauss hypergeometric function in
\cite[Equation~(4.7)]{paris2013asym} (with $\varepsilon=1$) to
\eqref{eq:ExpFormula}. This ends the proof.
\end{proof}

\end{document}